\newtheorem{thm}{Theorem}[section]
\newtheorem{claim}[thm]{Claim}
\newtheorem{lemma}[thm]{Lemma}
\newtheorem{prop}[thm]{Proposition}
\theoremstyle{definition}
\newtheorem{defn}[thm]{Definition}
\theoremstyle{remark}
\newtheorem{remark}[thm]{Remark}
\numberwithin{equation}{section}
\newcommand{\Integ}{\mathbb Z}
\newcommand{\B}{\langle B \rangle}
\newcommand{\C}{\langle C \rangle}
\newcommand{\SG}{\mathcal{S}_\Gamma}
\newcommand{\SB}{\mathcal{S}_B}
\newcommand{\SC}{\mathcal{S}_C}
\newcommand{\comments}[1]{}
\begin{document}

\author[Wes Camp]{Wes Camp}
\address{Department of Mathematics\\
        Vanderbilt University\\
        Nashville, TN 37240}
\email{w.camp@vanderbilt.edu}

\title[]{Right-angled Artin groups with non-path-connected boundary}%

\begin{abstract}
We place conditions on the presentation graph $\Gamma$ of a right-angled Artin group $A_\Gamma$ that guarantee the standard CAT(0) cube complex on which $A_\Gamma$ acts geometrically has non-path-connected boundary.
\end{abstract}

\maketitle
%


\section{Introduction}

In \cite{Gr}, Gromov showed that if $G$ is a hyperbolic group acting geometrically on two metric spaces $X$ and $Y$, then the boundaries of $X$ and $Y$ are homeomorphic. The same is not true for CAT(0) spaces; in \cite{CK} Croke and Kleiner demonstrate a group that acts geometrically on two CAT(0) spaces with non-homeomorphic boundaries, and it was later shown (\cite{Wi}) that the same group has uncountably many distinct CAT(0) boundaries. The group is the right-angled Artin group whose presentation graph is the path on four vertices $P_4$, and so has presentation $$\langle a,b,c,d \mid [a,b]=[b,c]=[c,d]=1 \rangle.$$ In \cite{CMT}, it is shown that the boundary of the standard CAT(0) cube complex on which this group acts is non-path-connected. The boundary of such a cube complex is connected if and only if the the presentation graph of the group is connected (and so the group is one-ended). In this paper, the method in \cite{CMT} is generalized to a class of right-angled Artin groups whose presentation graphs admit a certain type of splitting. The main theorem here is as follows:

\begin{thm}
Let $\Gamma$ be a connected graph. Suppose $\Gamma$ contains an induced subgraph $(\{a,b,c,d\}, \{\{a,b\}, \{b,c\}, \{c,d\}\})$ (isomorphic to $P_4$), and there are subsets $B\subset lk(c)$ and $C \subset lk(b)$ with the following properties:
\begin{enumerate}
\item $B$ separates $c$ from $a$ in $\Gamma$, with $d\notin B$;
\item $C$ separates $b$ from $d$ in $\Gamma$, with $a\notin C$;
\item $B\cap C=\emptyset$.
\end{enumerate}

Then $\partial \mathcal{S}_\Gamma$ is not path connected.
\end{thm}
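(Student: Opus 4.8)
The plan is to produce two points of $\partial\SG$ that no path can join, by setting up a topologist's-sine-curve obstruction out of the separating sets $B$ and $C$. First I would fix a combinatorial model of the boundary: a point of $\partial\SG$ is an equivalence class of asymptotic CAT(0) geodesic rays from the base vertex, and in the cube complex such a ray is recorded by the sequence of hyperplanes it crosses, hyperplanes being typed by the vertices of $\Gamma$ with two types allowed to cross exactly when the vertices are adjacent in $\Gamma$. Convergence in the cone topology then means that the crossed-hyperplane patterns agree on larger and larger combinatorial balls. Using the induced $P_4$ I would select the two candidate points $\xi,\eta$ to be interchanged by the symmetry $(a,b,B)\leftrightarrow(d,c,C)$ of the hypotheses; for definiteness I would take them to be the attracting and repelling endpoints of a single hyperbolic isometry $g$ whose axis is built to run alternately toward the $a$-- and $d$--directions, crossing both separating sets.

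The engine is a wall-system. Because $B\subset lk(c)$, the subgroup $\B$ commutes with $c$, so $\langle B\cup\{c\}\rangle\cong\B\times\langle c\rangle$ and $\SG$ contains isometrically embedded product regions $\SB\times\Real$; translating one such region by $A_\Gamma$ produces a family of \emph{$B$--walls}. Since $B$ separates $c$ from $a$ with $d\notin B$, each $B$--wall cuts $\SG$ into a halfspace carrying the $a$--direction and one carrying the $c,d$--directions, so the walls are coherently oriented between $\xi$ and $\eta$. Symmetrically $C\subset lk(b)$ together with the separation of $b$ from $d$ gives product regions $\SC\times\Real$ and a family of \emph{$C$--walls}. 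The hypothesis $B\cap C=\emptyset$ is exactly what makes the two families independent, so that the translates $g^{n}W$ (with $W$ a $B$-- or $C$--wall) can be arranged into a single nested sequence $\cdots\supset H_{-1}\supset H_{0}\supset H_{1}\supset\cdots$ of halfspaces whose positive end closes down on $\xi$ and whose negative end closes down on $\eta$, with $\xi$ and $\eta$ separated by every wall in between.

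Finally I would run the sine-curve argument. Suppose $\sigma:[0,1]\to\partial\SG$ is a path with $\sigma(0)=\eta$ and $\sigma(1)=\xi$, and let $s=\sup\set{t:\sigma(t)\text{ lies on the }\eta\text{ side of every wall}}$. Continuity forces $\sigma(s)=\xi$, while for $t$ slightly larger than $s$ the ray $\sigma(t)$ has already crossed infinitely many of the nested walls; the ``side of the $n$-th wall'' data of $\sigma(t)$ then supplies a coordinate that must oscillate as $t\to s^{+}$, since each successive wall is crossed through its own frontier and these frontiers do not converge to $\xi$, contradicting continuity of $\sigma$ at $s$. I expect the main obstacle to be precisely the claim that the wall frontiers do \emph{not} accumulate at $\xi$ --- equivalently, that the halfspaces $H_n$ shrink to the single boundary point $\xi$ rather than to a whole arc. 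This fails for a genuine flat (product) direction, where the halfspaces of a Euclidean factor always retain a half-circle of boundary, so the heart of the proof is to show that the chosen direction is \emph{not} flat: the three hypotheses must be combined to certify that $g$ is rank-one-like in the relevant range, i.e. that separation by $B$ and by $C$ forces a uniform lower bound on how far a ray must backtrack to pass from one side of a wall to the other. Verifying this quantitative non-flatness, and checking that the induced non-edges $\{a,c\}$, $\{a,d\}$, $\{b,d\}$ of the $P_4$ are what prevent the walls from being bridged inside a single flat, is where the real work lies.
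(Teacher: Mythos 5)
Your overall strategy --- two boundary points, a family of separating subcomplexes coming from the splittings over $\B$ and $\C$, and a topologist's-sine-curve obstruction to a path between them --- is the same one the paper uses, but two of your key mechanisms are wrong, and the step you defer as ``the real work'' is not where the work actually is. First, the contradiction cannot come from a single nested sequence of halfspaces whose frontiers all stay away from $\xi$. In the actual geometry the limit sets of the two families behave oppositely: the $C$-walls $w_i\SC$ have limit sets converging to the endpoint of $r$, while the $B$-walls $v_i\SB$ have limit sets converging to the endpoint of $s$, a \emph{different} boundary point. So half of your frontiers do converge to $\xi$, and a path crossing them all could perfectly well converge to $\xi$; what kills the path is that it is forced (Remark \ref{bounce}) to hit the two families \emph{alternately} and in order, hence to oscillate between $\eps$-neighborhoods of two distinct points. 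Your sup argument also does not close: ``continuity forces $\sigma(s)=\xi$'' does not follow from the definition of $s$.

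Second, taking $\xi,\eta$ to be the two ends of an axis of a single isometry $g$, with the walls its translates of a fixed wall, will not give the needed convergence of limit sets. The paper's rays $r$ and $s$ are both based at $*$ and have exponents $k_i$ growing without bound; this growth is what anchors each coset $w_i\SC$ at a vertex $v_{i-1}'$ far out along $r$ and forces every geodesic from $*$ out to $w_i\SC$ to pass within a uniform distance of $v_{i-1}'$ (Claim \ref{close}), which is what makes $L(w_i\SC)\to L(r)$. That claim --- not any rank-one or non-flatness property --- is the technical heart, and it is proved by word combinatorics in $A_\Gamma$ (the deletion and diamond lemmas, Lemmas \ref{radel} and \ref{diamond}). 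It is also the only place the hypothesis $B\cap C=\emptyset$ enters: it yields $C\cap lk(a)\cap lk(d)=C\cap lk(a)\cap lk(c)=\emptyset$ (Claim \ref{empty}), which is exactly what prevents a geodesic from commuting its way past $v_i'$. Your reading of $B\cap C=\emptyset$ as ``independence of the two wall families'' misses this use, and ``quantitative non-flatness of $g$'' is not a property these groups have in the relevant directions (the separating subcomplexes themselves contain flats); no such statement is needed or available.
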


Here, $\SG$ is the standard CAT(0) cube complex on which the right-angled Artin group $A_\Gamma$ with presentation graph $\Gamma$ acts geometrically, and $lk(v)$ is the set of vertices of $\Gamma$ sharing an edge with $v$. We in fact show a slightly stronger result, with the hypothesis $B\cap C = \emptyset$ replaced with the statement of Claim \ref{empty}. The hypotheses here essentially require a copy of $P_4$ in $\Gamma$ that is either not contained in a cycle, or has every cycle containing it separated by chords based at $b$ and $c$. It is a known fact of graph theory that any graph that does not split as a join contains an induced subgraph isomorphic to $P_4$, and any graph $\Gamma$ that splits as a non-trivial join has $\partial \SG$ path connected, so the hypothesis that $\Gamma$ contain a copy of $P_4$ is satisfied in any interesting case. 

If a connected boundary of a CAT(0) space is locally connected, then it is a Peano space (a continuous image of [0,1]) and therefore path connected. The boundaries of some right-angled Coxeter groups are therefore known to be path connected (\cite{MRT} and \cite{CM}), because they are locally connected. However, a consequence of a theorem in \cite{MR} is that for right-angled Artin groups, $\partial \SG$ is locally connected iff $\Gamma$ is a complete graph; i.e. $A_\Gamma \cong \Integ^n$ and $\partial \SG \cong S^{n-1}$. Thus no approach involving local connectivity works for right-angled Artin groups.

In \cite{Mo}, the construction of \cite{CK} is generalized to demonstrate a class of groups with non-unique boundary. These groups are of the form $$G=(G_1 \times \Integ^n) *_{\Integ^n} (\Integ^n \times \Integ^m) *_{\Integ^m} (\Integ^m \times G_2),$$ where $G_1$ and $G_2$ are infinite CAT(0) groups. It is easily verified that if $G_1$ and $G_2$ are right-angled Artin groups, then $G$ is a right-angled Artin group whose presentation graph satisfies the conditions of the main theorem of this paper; in fact, the method of this paper should work even if $G_1$ and $G_2$ are arbitrary infinite CAT(0) groups.

It seems this boundary path connectivity problem may be related to the question of when two right-angled Artin groups are quasi-isometric. In \cite{BN}, Behrstock and Neumann show that all right-angled Artin groups whose presentation graphs are trees of diameter greater than 2 are quasi-isometric; in \cite{BKS}, Bestvina, Kleiner, and Sageev show that right-angled Artin groups with atomic presentation graphs (no valence 1 vertices, no separating vertex stars, and no cycles of length $\leq 4$) have $A_\Gamma$ quasi-isometric to $A_{\Gamma'}$ iff $\Gamma \cong \Gamma'$. The connection between these results and the result of this paper is that if $\Gamma$ is a tree of diameter greater than 2, then $\Gamma$ satisfies the hypotheses of the main theorem here, and therefore $\partial \SG$ has non-path-connected boundary; if $\Gamma$ is atomic, then $\Gamma$ cannot satisfy the hypotheses of the main theorem here. 

The author would like to thank Mike Mihalik for his guidance during the writing of this paper.
\section{Preliminaries}

\begin{defn}
Given a (undirected) graph $\Gamma$ with vertex set $S=a_1, \dots, a_n$, the corresponding \textbf{right-angled Artin group} $A_\Gamma$ is the group with presentation $$\langle a_1, \dots, a_n \mid [a_i, a_j] \text{ if } i<j \text{ and } \{a_i,a_j\} \text{ is an edge of } \Gamma \rangle.$$ We call $\Gamma$ the \textbf{presentation graph} for $A_\Gamma$.
\end{defn}

\begin{defn}
\label{rearr}
If $A_\Gamma$ is a right-angled Artin group with Cayley graph $\Lambda_\Gamma$, let $\overline e\in S$ be the label of the edge $e$ of $\Lambda_\Gamma$. An {\bf edge path}  $\alpha\equiv (e_1,e_2,\ldots, e_n)$ in $\Lambda_\Gamma$ is a map $\alpha:[0,n]\to \Lambda_\Gamma$ such that $\alpha$ maps $[i,i+1]$ isometrically to the edge $e_i$. For $\alpha$ an edge path in $\Lambda_\Gamma$, let $lett(\alpha)\equiv \{\overline e_1, \ldots , \overline e_n\}$, and let $\overline \alpha\equiv\overline e_1\cdots \overline e_n$. If $\beta$ is another geodesic with the same initial and terminal points as $\alpha$, then call $\beta$ a {\bf rearrangement} of $\alpha$.
\end{defn}

\begin{lemma}
\label{radel}
If $w=g_1 \dots g_k$ is a word in $A_\Gamma$ (with each $g_i \in S^\pm$) that is not of minimal length, then two letters of $g_1\dots g_k$ \textbf{delete}; that is, for some $i < j$, $g_i=g_j^{-1}$, the sets $\{g_i, g_j\}$ and $\{g_{i+1}, \dots, g_{j-1}\}$ commute, and $w=g_1 \dots g_{i-1} g_{i+1} \dots g_{j-1} g_{j+1} \dots g_k$.
\end{lemma}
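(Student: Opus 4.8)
The plan is to prove the contrapositive reformulation: a word with no deleting pair is geodesic. Following the standard terminology for right-angled Artin groups, call a word $g_1\cdots g_k$ (with each $g_i\in S^\pm$) \emph{reduced} if there is no pair $i<j$ with $g_i=g_j^{-1}$ such that $g_\ell$ commutes with $g_i$ for every $i<\ell<j$; that is, if it admits no deleting pair in the sense of the statement. The lemma is then equivalent to the assertion that every reduced word is geodesic, and I would in fact establish the stronger normal-form statement that any two reduced words representing the same element of $A_\Gamma$ differ by a sequence of \emph{shuffles}, meaning transpositions $g_\ell g_{\ell+1}\mapsto g_{\ell+1}g_\ell$ of adjacent letters whose underlying generators are equal or joined by an edge of $\Gamma$ (and hence commute in $A_\Gamma$). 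Since shuffles preserve word length, this makes all reduced representatives of a fixed element have a common length, which is then forced to be the minimal length; contrapositively, a non-geodesic word cannot be reduced, so it has a deleting pair.

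The first, easy, step records that if $g_1\cdots g_k$ \emph{does} admit a deleting pair $(i,j)$, then each of $g_{i+1},\dots,g_{j-1}$ commutes with $g_i$, so shuffling them leftward past $g_i$ brings $g_i$ and $g_j$ adjacent as $g_i g_i^{-1}$, which freely cancels. This simultaneously justifies the displayed equality $w=g_1\cdots g_{i-1}g_{i+1}\cdots g_{j-1}g_{j+1}\cdots g_k$ in the statement and shows that such a $w$ is not geodesic. The second step is to set up a rewriting system on words over $S^\pm$ with two kinds of moves, shuffles and deletions of deleting pairs: both preserve the represented element, while deletions strictly shorten the word, so the process terminates at a reduced word. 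The third and central step is to prove confluence up to shuffle: whenever a single element of $A_\Gamma$ has two reduced representatives, they are shuffle-equivalent.

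For this confluence step I would induct on the number of vertices $n=\abs{S}$ using the standard amalgamated-product splitting $A_\Gamma = A_{\Gamma\setminus a}\ast_{A_{lk(a)}}\bigl(\langle a\rangle\times A_{lk(a)}\bigr)$ for a chosen vertex $a$. Tracking the positions of the $a^{\pm 1}$ letters, reducedness prevents an $a^{+1}$ and an $a^{-1}$ from sitting in a common block all of whose intermediate letters commute with $a$, so, after shuffling within such blocks, the surviving $a$-letters are forced into a predictable alternating pattern of syllables. Projecting a reduced word to the alternating normal form of the amalgam and applying the inductive hypothesis to the vertex groups $A_{\Gamma\setminus a}$ and $A_{lk(a)}$, each on fewer vertices, lets me match the syllables of two reduced representatives and conclude that they differ only by shuffles. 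The base case $n=1$ is $\Integ$, where reduced means all exponents share a sign and the claim is immediate.

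The main obstacle is exactly this confluence/normal-form step: unlike in a free group, cancellation in $A_\Gamma$ is nonlocal, a deleting pair need not appear as an adjacent inverse pair, and genuinely different-looking reduced words can represent the same element (for instance $ab$ versus $ba$ when $\{a,b\}$ is an edge). The delicate point is to show that commutation is the \emph{only} ambiguity, so that no strictly shorter representative is ever concealed behind a sequence of shuffles; the bookkeeping of how shuffles interact with the amalgam syllable structure is the part that requires real care. If that bookkeeping becomes unwieldy, I would fall back on Viennot's heaps-of-pieces model, encoding each word as a heap of labelled pieces (two pieces being dependent exactly when their generators are equal or non-adjacent), where shuffles correspond to identical heaps and deletions remove a minimal oppositely-signed covering pair; the fact that the heap of a reduced word is a well-defined invariant of the group element then packages the confluence cleanly.
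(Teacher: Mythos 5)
Your approach is correct in outline but is genuinely different from the paper's. The paper does not go through the normal form theorem at all: it takes the non-geodesic word $w=g_1\cdots g_k$, a minimal-length representative $h_1\cdots h_m$ of the same element, and a van Kampen diagram for the loop $g_1\cdots g_k h_m^{-1}\cdots h_1^{-1}$; each boundary edge emits a band through the relator squares, and since $k>m$ a counting argument forces some band to have both endpoints among the $g_i$, giving $g_i=g_j^{-1}$, while passing to an innermost such band forces every intermediate band to cross it, which is exactly the commutation condition in the statement. That argument is short, proves only what is needed, and sets up the band machinery that the paper reuses in Remark \ref{vkchange} and Lemma \ref{diamond}. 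Your route --- termination of a shuffle/delete rewriting system plus confluence up to shuffle, proved by induction on $\abs{S}$ via the splitting $A_\Gamma = A_{\Gamma\setminus a} *_{A_{lk(a)}} \bigl(\langle a\rangle\times A_{lk(a)}\bigr)$, with Viennot's heaps as a fallback --- is the classical combinatorial proof of the full normal form theorem for right-angled Artin groups (Servatius, Green), and it yields strictly more: not just that a non-geodesic word contains a deleting pair, but that any two reduced representatives are shuffle-equivalent. The trade-off is that the confluence step, which you rightly identify as the crux, is left at the level of a plan; the syllable bookkeeping in the amalgam induction is the bulk of the work, so to make this self-contained you would need to carry it out in detail or cite the normal form theorem directly, whereas the heaps-of-pieces formulation does package it cleanly if you develop that model. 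The paper's band argument sidesteps all of this at the price of proving only the weaker (but sufficient) statement.
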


\begin{proof} 
Let $w=h_1 \dots h_m$ be a minimal length word representing $w$, and draw a van Kampen diagram $D$ for the loop $g_1 \dots g_k h_m^{-1} \dots h_1^{-1}$. For each boundary edge $e_i$ corresponding to a $g_i$, trace a band across the diagram by picking the opposite edge of $e_i$ in the relation square containing $e_i$, and continuing to pick opposite edges (without going backwards). Note that such a band cannot cross itself, and so this band must end on another boundary edge of $D$. Since $k > m$, there is some boundary edge $e_i$ corresponding to some $g_i$ that has its band $B$ end on a boundary edge $e_j$ corresponding to $g_j$, with $i < j$. Note this implies $g_i = g_j^{-1}$. Now, either all the bands corresponding to $g_{i+1}, \dots, g_{j-1}$ cross $B$ (implying each of $g_{i+1}, \dots, g_{j-1}$ commutes with $g_i$ and $g_j$), or some band corresponding to one of $g_{i+1}, \dots, g_{j-1}$ ends on a boundary edge corresponding to another of $g_{i+1}, \dots, g_{j-1}$. Picking an ``innermost'' such band and repeating the above argument gives the desired result.
\end{proof}

\begin{remark}
\label{vkchange}
Note that the bands in the van Kampen diagram $D$ share the same labels along their `sides'. This means that deleting the band $B$ from the diagram and matching up the separate parts of what remains (along paths with the same labels) gives a van Kampen diagram $D'$ for the loop \\$w=g_1 \dots g_{i-1} g_{i+1} \dots g_{j-1} g_{j+1} \dots g_k h_m^{-1} \dots h_1^{-1}$.
\end{remark}

\begin{remark}
Given a non-geodesic edge path $(e_1, \dots, e_k)$ in the Cayley graph $\Lambda_\Gamma$ for $A_\Gamma$, we say edges $e_i$ and $e_j$ delete if their corresponding labels delete in the word $\overline{e_1}\dots\overline{e_k}$.
\end{remark}

\begin{lemma}
\label{diamond}
Suppose $A_\Gamma$ is a right-angled Artin group, and $(\alpha_1,\alpha_2)$ and $(\beta_1,\beta_2)$ are geodesics between the same two points in in the Cayley graph $\Lambda_\Gamma$ for $A_\Gamma$. There exist geodesics $(\gamma_1,\tau_1), (\gamma_1,\delta_1), (\delta_2,\gamma_2)$, and $(\tau_2,\gamma_2)$ with the same end points as $\alpha_1,\beta_1,\alpha_2,\beta_2$ respectively, such that:
\begin{enumerate}
\item $\tau_1$ and $\tau_2$ have the same labels,
\item $\delta_1$ and $\delta_2$ have the same labels, and
\item $lett(\tau_1)$ and $lett(\delta_1)$ are disjoint and commute.
\end{enumerate}
Furthermore, the paths $(\tau_1^{-1},\delta_1)$ and $(\delta_2,\tau_2^{-1})$ are geodesic. 
\end{lemma}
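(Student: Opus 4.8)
The plan is to realize both geodesics inside a single van Kampen diagram and to read off the four desired blocks from the resulting band matching. First I would form the closed loop $\alpha_1\alpha_2\beta_2^{-1}\beta_1^{-1}$, which is trivial in $A_\Gamma$, and take a van Kampen diagram $D$ for it as in the proof of Lemma \ref{radel}; since every relator is a commutator, every $2$-cell is a square and I may trace bands just as there. Each band joins two boundary edges carrying the same generator. The key preliminary observation is that no band can join two edges lying on the $\alpha$-arc $\alpha_1\alpha_2$: such a band would exhibit a deletion within $\overline{\alpha_1}\,\overline{\alpha_2}$, contradicting that $(\alpha_1,\alpha_2)$ is geodesic; likewise no band joins two edges of the $\beta$-arc. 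Hence the bands form a label-preserving bijection between the edges of $\alpha$ and the edges of $\beta$ (in particular the two geodesics have the same length and the same multiset of letters).

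Using the midpoints $m_\alpha$ (between $\alpha_1,\alpha_2$) and $m_\beta$ (between $\beta_1,\beta_2$), I would partition the edges of $\alpha$ into four classes according to where their band lands on $\beta$: let $G_1,T$ be the edges of $\alpha_1$ matched into $\beta_1,\beta_2$ respectively, and let $D,G_2$ be the edges of $\alpha_2$ matched into $\beta_1,\beta_2$ respectively. The decisive structural fact, which I will extract from the planarity of $D$ (two bands cross inside a single square cell, and such a cell is a relator $[x,y]$, so crossing bands carry distinct commuting generators), is the following: two matched edges whose endpoints occur in opposite orders along $\alpha$ and along $\beta$ must carry distinct commuting generators. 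Applying this to a $T$-edge (before $m_\alpha$ on $\alpha$, after $m_\beta$ on $\beta$) against a $D$-edge (after $m_\alpha$ on $\alpha$, before $m_\beta$ on $\beta$) shows at once that $lett(T)$ and $lett(D)$ are disjoint and commute.

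The same order principle lets me sort each half into the required blocks: within $\alpha_1$ any occurrence of a $T$-edge preceding a $G_1$-edge is an inversion relative to $\beta$, hence a commuting pair, so a finite sequence of commuting transpositions rearranges $\alpha_1$ to a word spelling the $G_1$-edges followed by the $T$-edges; the identical argument rearranges $\beta_1$ into (its) $G_1$-block followed by $D$, rearranges $\alpha_2$ into $D$ followed by (its) $G_2$-block, and rearranges $\beta_2$ into $T$ followed by $G_2$. Because the relative order of any two dependent (non-commuting) occurrences must agree along $\alpha$ and along $\beta$ — this is the contrapositive of the principle above — the two spellings of $G_1$ coming from $\alpha_1$ and from $\beta_1$ are rearrangements of one another, and similarly for $G_2$, for $T$ (from $\alpha_1$ and $\beta_2$), and for $D$ (from $\alpha_2$ and $\beta_1$). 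Choosing one representative in each class makes $\gamma_1,\gamma_2,\tau_1=\tau_2,\delta_1=\delta_2$ well defined and produces the four geodesics with the claimed endpoints, each being a rearrangement of the corresponding $\alpha_i$ or $\beta_i$ and hence geodesic. Finally, since $lett(\tau_1)$ and $lett(\delta_1)$ are disjoint and commute, the words $\overline{\tau_1}^{\,-1}\overline{\delta_1}$ and $\overline{\delta_2}\,\overline{\tau_2}^{\,-1}$ admit no deletion, so $(\tau_1^{-1},\delta_1)$ and $(\delta_2,\tau_2^{-1})$ are geodesic, as required.

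I expect the main obstacle to be the rigorous bookkeeping of the band matching and, in particular, the crossing/order correspondence of the second paragraph: one must verify from the diagram that the bands give a bijection with no intra-arc bands, and that two bands cross precisely when their endpoints are oppositely ordered on the two arcs, a crossing being forced to occur in a commutator square. Once this combinatorial dictionary between the planar diagram and the commutation relation of $\Gamma$ is in place, the remaining steps are routine sorting arguments.
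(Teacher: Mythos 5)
Your proposal is correct and follows essentially the same route as the paper: a van Kampen diagram for the loop $\alpha_1\alpha_2\beta_2^{-1}\beta_1^{-1}$, the band matching between the two geodesics, and the observation that crossing bands force distinct commuting generators, which yields both the block decomposition and the commutation of $lett(\tau_1)$ with $lett(\delta_1)$. The only (minor) difference is that you read off the label equality of $\tau_1$ and $\tau_2$ directly from the global band bijection, where the paper uses a second van Kampen diagram for $(\tau_1,\delta_2,\tau_2^{-1},\delta_1^{-1})$; both are sound.
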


\begin{center}
\includegraphics{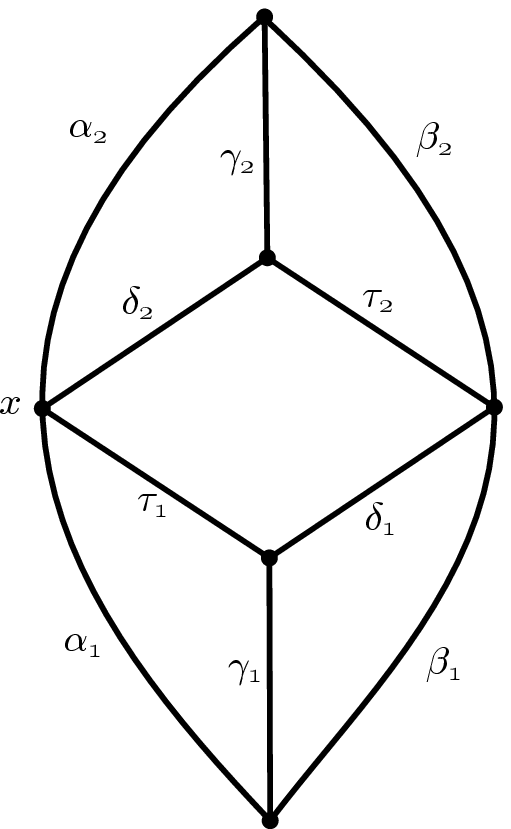}
\end{center}

\centerline{\textsc{Figure 1}}

\medskip

\begin{proof}
Let $D$ be a van Kampen diagram for the loop $(\alpha_1, \alpha_2, \beta_2^{-1}, \beta_1^{-1})$, and let $\alpha_1=(a_1, \dots, a_k)$, $\beta_1=(b_1, \dots, b_m)$. Let $a_{i_1}, \dots, a_{i_j}$ be (in order) the edges of $\alpha_1$ whose bands in $D$ end on $\beta_1$. Note that by Lemma \ref{radel}, $\beta_1$ can be rearranged to begin with an edge labeled $\overline{a_{i_1}}$, since $a_{i_1}$ and $b_{\ell_1}$ delete in $(\alpha_1^{-1}, \beta_1)$ for some $\ell_1$ and all the bands based at $b_1, \dots, b_\ell, a_1, \dots, a_{{i_1}-1}$ cross the band based at $a_{i_1}$ and ending at $b_{\ell_1}$. Similarly, $\beta_1$ can be rearranged to begin with an edge labeled $\overline{a_{i_1}}$ followed by an edge labeled $\overline{a_{i_2}}$, and continuing in this manner, we obtain a rearrangement of $\beta_1$ that begins with $\gamma_1=(\overline{a_{i_1}}, \dots, \overline{a_{i_j}})$, and we let $\delta_1$ be the remainder of this rearrangment. This argument also implies $\alpha_1$ can be rearranged to begin with $\gamma_1$, and we let $\tau_1$ be the remainder of this rearrangement. Note that if $e$ is an edge of $\tau_1$, no edge of $\delta_1$ is labeled $\overline{e}$ or $\overline{e}^{-1}$, since bands with those labels must have crossed in $D$. We obtain $\gamma_2, \tau_2$ and $\delta_2$ in the analogous way from $\alpha_2$ and $\beta_2$, and note that in a van Kampen diagram $B'$ for $(\tau_1, \delta_2, \tau_2^{-1}, \delta_1^{-1})$, no band based on $\tau_1$ can end on $\delta_2$, since $(\tau_1, \delta_2)$ is geodesic, and no band based on $\tau_1$ ends on $\delta_1$, since $\tau_1$ and $\delta_1$ share no labels or inverse labels. Therefore all bands on $\tau_1$ end on $\tau_2$, so $\tau_1$ and $\tau_2$ have the same labels, as do $\delta_1$ and $\delta_2$.
\end{proof}

\begin{defn}
Under the hypotheses of the previous lemma, we call $\tau_1$ the \textbf{down edge path} at $x$, and we call $\delta_2$ the \textbf{up edge path} at $x$. If $\alpha_1$ and $\beta_1$ have the same length, we call the above figure the \textbf{diamond} at $x$ for $(\alpha_1, \alpha_2)$ and $(\beta_1, \beta_2)$.
\end{defn}

\begin{defn}
$P_4$ is the (undirected) graph on four vertices $a,b,c,d$, with edge set $\{\{a,b\},\{b,c\},\{c,d\}\}$.
\end{defn}

\begin{defn}
The \textbf{union} of two graphs $(V_1, E_1)$ and $(V_2, E_2)$ is the graph $(V_1 \cup V_2, E_1 \cup E_2)$.
\end{defn}

\begin{defn}
The \textbf{join} of two graphs $(V_1, E_1)$ and $(V_2, E_2)$ is the graph $(V_1 \cup V_2, E_1 \cup E_2 \cup (V_1\times V_2))$.
\end{defn}

\begin{defn}
A graph is \textbf{decomposable} if it can be expressed as joins and unions of isolated vertices.
\end{defn}

The following is Theorem 9.2 in \cite{Mer}.

\begin{thm}
A finite graph $G$ is decomposable iff it does not contain $P_4$ as an induced subgraph.
\end{thm}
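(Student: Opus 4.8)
The plan is to prove the equivalence $\Leftrightarrow$ by showing each direction separately. For the forward direction, I want to show that if $G$ is decomposable (built from isolated vertices via joins and unions) then $G$ has no induced $P_4$. Here the natural strategy is structural induction on the decomposition expression. The base case is a single isolated vertex, which trivially has no induced $P_4$. For the inductive step, I would show that neither the union nor the join operation can create an induced $P_4$ if neither factor already contains one. The key observation for the union $G = G_1 \sqcup G_2$ is that there are no edges between $V_1$ and $V_2$, so any induced subgraph on four vertices either lies entirely in one side (handled by induction) or splits across the two sides, in which case the induced subgraph is disconnected — but $P_4$ is connected, a contradiction. For the join $G = G_1 * G_2$, the analogous observation is that \emph{every} pair with one vertex in $V_1$ and one in $V_2$ is adjacent; so if a putative induced $P_4$ uses vertices from both sides, I would check the few cases for how the four vertices split ($1{+}3$ or $2{+}2$) and derive that the induced subgraph has too many edges to be $P_4$ (its complement would be disconnected, whereas $\overline{P_4} \cong P_4$ is connected).

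For the reverse direction — if $G$ contains no induced $P_4$ then $G$ is decomposable — I would argue by induction on the number of vertices $n$. The base case $n = 1$ is an isolated vertex. For the inductive step, the heart of the matter is to produce a top-level join or union decomposition of $G$ itself, after which both pieces inherit the ``no induced $P_4$'' property (since an induced subgraph of an induced subgraph is induced) and the inductive hypothesis finishes the job. The cleanest route is to consider the complement graph $\overline{G}$: a union decomposition of $G$ corresponds exactly to a join decomposition of $\overline{G}$ and vice versa, and since $\overline{P_4} \cong P_4$ (as noted above, $P_4$ is self-complementary), the class of $P_4$-free graphs is closed under complementation. So it suffices to show that a $P_4$-free graph on $\geq 2$ vertices either is disconnected (yielding a union decomposition) or has disconnected complement (yielding a join decomposition).

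The main obstacle, and the real content of the reverse direction, is precisely this dichotomy: I must rule out the case where $G$ is connected \emph{and} $\overline{G}$ is connected. This is the classical fact that a graph and its complement cannot both be disconnected is easy, but here I need the stronger statement for $P_4$-free graphs that connectivity of both $G$ and $\overline{G}$ forces an induced $P_4$. My plan is to assume both $G$ and $\overline{G}$ are connected on a vertex set of size $\geq 4$ and directly extract four vertices inducing a $P_4$. Concretely, I would take an edge $\{u,v\}$ in $G$ and a non-edge $\{x,y\}$ (an edge of $\overline{G}$), and use connectivity of both $G$ and $\overline G$ to find a short alternating path between a vertex that is adjacent and a vertex that is non-adjacent to a fixed vertex; following such an alternating pattern along a path, the first place where an edge of $G$ meets a non-edge produces three consecutive vertices with exactly the adjacency pattern of a $P_3$, and a fourth vertex can be chosen to extend it to an induced $P_4$. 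Once this dichotomy is established, the induction assembles the full decomposition.
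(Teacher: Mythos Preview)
The paper does not actually prove this theorem; it is simply quoted as Theorem~9.2 of \cite{Mer} and used as a black box. So there is no proof in the paper to compare your proposal against.

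On its own merits, your architecture is the standard one (this is Seinsche's characterization of cographs). The forward direction is fine, and using the self-complementarity $\overline{P_4}\cong P_4$ to handle the join case is clean. For the reverse direction, reducing to the dichotomy ``$G$ or $\overline{G}$ is disconnected'' is exactly right, but your sketch of why both being connected forces an induced $P_4$ is the one place that is genuinely incomplete: obtaining an induced $P_3$ from connectivity is easy, but the assertion that ``a fourth vertex can be chosen to extend it to an induced $P_4$'' is precisely the non-trivial content and does not follow from what you wrote. One clean way to close this gap is to fold the dichotomy into the induction on $n$: given any vertex $v$, the graph $G-v$ is still $P_4$-free, so by induction (and self-complementarity) we may assume $G-v$ is disconnected with components $C_1,\dots,C_k$. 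If $v$ is adjacent to every other vertex then $\overline{G}$ is disconnected and we are done; otherwise some $C_i$ contains adjacent vertices $x,y$ with $vx\in E(G)$ and $vy\notin E(G)$ (walk along a path in $C_i$ from a neighbor of $v$ to a non-neighbor), and then any neighbor $z$ of $v$ lying in a different component $C_j$ yields the induced path $z\text{--}v\text{--}x\text{--}y$, since $z$ has no edges to $C_i$ in $G-v$.
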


In particular, if a connected graph $G$ does not contain $P_4$ as an induced subgraph, then it must split as the join $G_1 \vee G_2$, for some subgraphs $G_1, G_2$ of $G$. 

\begin{defn}
For a graph $\Gamma$ and a vertex $a$ of $\Gamma$, $lk(a)=\{b \in \Gamma \mid \{a,b\} \text{ is an edge of } \Gamma\}$.
\end{defn}

Let $\Lambda_\Gamma$ be the Cayley graph for the group $A_\Gamma$.

\begin{defn}
The \textbf{standard complex} $\mathcal{S}_\Gamma$ for the group $A_\Gamma$ is the CAT(0) cube complex whose one-skeleton is $\Lambda_\Gamma$, with each cube given the geometry of $[0,1]^n$ for the appropriate $n$.
\end{defn}

For more on cube complexes and the definitions below, see \cite{Sag}.

\begin{defn}
A \textbf{midcube} in a cube complex $C$ is the codimension 1 subspace of an $n$-cube $[0,1]^n$ obtained by restricting exactly one coordinate to $\frac{1}{2}$. A \textbf{hyperplane} is a connected nonempty subspace of $C$ whose intersection with each cube is either empty or consists of one of its midcubes.
\end{defn}

\begin{lemma}
\label{hypsplit}
If $D$ is a hyperplane of the cube complex $C$, then $C-D$ has exactly two components.
\end{lemma}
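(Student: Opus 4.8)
The plan is to prove the standard hyperplane separation result for CAT(0) cube complexes, which splits naturally into two parts: showing that $C - D$ is disconnected (at least two components) and showing it has no more than two.

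First I would set up the combinatorial structure associated to the hyperplane $D$. Recall that $D$ is built from midcubes, and each midcube of an $n$-cube is obtained by fixing exactly one coordinate to $\tfrac{1}{2}$; this distinguishes a direction, and I would use this to define an equivalence relation on the edges of $C$. Two edges are equivalent if they are opposite edges of a common square (a $2$-cube), and $D$ is the hyperplane "dual" to one such equivalence class (the set of midcubes it meets corresponds precisely to the edges crossed by $D$). I would make precise that $D$ meets an edge $e$ transversally in its midpoint exactly when $e$ lies in the equivalence class determining $D$. This gives a clean bookkeeping device: a path in $C$ crosses $D$ exactly when it passes through one of these distinguished edges an odd number of times, counted with the appropriate sign.

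Next, for the lower bound (at least two components) I would construct a function that witnesses the separation. The idea is to orient the edges dual to $D$ consistently, assigning to each vertex a value in $\{+,-\}$ according to which side of $D$ it lies on, and then show this is well-defined. The key point is that along any edge \emph{not} dual to $D$ the value does not change, while crossing a dual edge flips it. To see well-definedness I would check consistency around each square: in a square either zero or two of its edges are dual to $D$ (since opposite edges are equivalent, dual edges come in opposite pairs), and in either case traversing the boundary returns the same value. This local consistency, together with the fact that $C$ is simply connected (it is CAT(0), hence contractible), lets me promote the local rule to a global two-valued function, whose two level sets are nonempty and separated by $D$. Hence $C - D$ has at least two components.

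For the upper bound (at most two components) I would show each of the two level sets is itself connected. Here I would use that the $1$-skeleton $\Lambda_\Gamma$ is connected, so any two vertices on the same side are joined by an edge path; I then argue this path can be pushed off $D$ or rerouted within a single side by a homotopy across squares, using convexity/CAT(0) geometry to control the crossings. The main obstacle I anticipate is precisely this global step: the local square-by-square consistency is routine, but rigorously passing from "locally two-valued" to "globally exactly two connected components" requires invoking simple connectivity of $C$ and a careful argument that each half is path-connected rather than merely that the sign function is well-defined. In the right-angled Artin group setting I expect to lean on the deletion and rearrangement machinery of Lemma \ref{radel} and Lemma \ref{diamond} to normalize edge paths and track exactly when they cross a given hyperplane, which is the technical heart of making the connectivity of each side precise.
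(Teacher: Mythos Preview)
The paper does not actually prove this lemma: it is stated as a standard background fact about CAT(0) cube complexes, with the reader directed to Sageev's paper \cite{Sag} for details. So there is no ``paper's own proof'' to compare against beyond the implicit citation.

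Your outline is essentially the standard argument (originating with Sageev): define a $\{+,-\}$-valued function on vertices via crossing parity with the equivalence class of edges dual to $D$, check local consistency on squares, and use simple connectivity of a CAT(0) cube complex to make it globally well-defined; then argue each level set is connected. That is the right shape of proof, and for a background lemma this level of detail is appropriate.

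One point worth flagging: the lemma is stated for a general (CAT(0)) cube complex $C$, not specifically for $\mathcal{S}_\Gamma$, so invoking the Cayley graph $\Lambda_\Gamma$ and the RAAG-specific machinery of Lemmas~\ref{radel} and~\ref{diamond} is out of place here. Those lemmas concern words in a right-angled Artin group and have no meaning for an arbitrary cube complex. The connectivity of each half-space is instead handled by the general fact that the carrier of $D$ is convex and splits as $D \times [0,1]$, so $N(D) \setminus D$ has exactly two pieces, and any vertex of $C \setminus D$ is joined to one of them by an edge path that never changes sign. Your ``push off or reroute'' step is gesturing at this, but should be phrased in terms of the carrier and disc diagrams (or the convexity of half-spaces) rather than the RAAG-specific deletion lemma.
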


Given a graph $\Gamma$, a vertex $v$ of $\Gamma$, and the corresponding standard complex $\SG$, note that if a hyperplane of $\SG$ intersects an edge of $\SG$ with label $v$, then every edge intersected by this hyperplane is also labeled $v$. Thus we can refer to hyperplanes in $\SG$ as $v$-hyperplanes, for $v$ a vertex of $\Gamma$. If $x$ is a vertex of $\SG$, then $xv$ and $x$ are separated by a $v$-hyperplane $D$. Let $x\mathcal{S}_{lk(v)}$ denote the cube complex generated by the coset $x\langle lk(v) \rangle$; then $D$ and $x\mathcal{S}_{lk(v)}$ are isometric and parallel, of distance $\frac{1}{2}$ apart.

\begin{defn}
A metric space $(X,d)$ is \textbf{proper} if each closed ball is compact.
\end{defn}

\begin{defn}
Let $(X,d)$ be a proper CAT(0) space. Two geodesic rays $c,c':[0,\infty)\rightarrow X$ are called \textbf{asymptotic} if for some constant $K$, $d(c(t),c'(t))\leq K$ for all $t\in[0,\infty)$. Clearly this is an equivalence relation on all geodesic rays in $X$. We define the \textbf{boundary} of $X$ (denoted $\partial X$) to be the set of equivalence classes of geodesic rays in $X$. We denote the union $X\cup\partial X$ by $\overline X$.
\end{defn}

The next proposition guarantees that the topology we wish to put on the boundary is independent of our choice of basepoint in $X$.

\begin{prop}
Let $(X,d)$ be a proper CAT(0) space, and let $c:[0,\infty)\rightarrow X$ be a geodesic ray. For a given point $x\in X$, there is a unique geodesic ray based at $x$ which is asymptotic to $c$.
\end{prop}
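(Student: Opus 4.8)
The plan is to establish existence by a compactness argument and asymptoticity via convexity of the CAT(0) metric, and to prove uniqueness by a short convexity argument. Write $p=c(0)$, and for each positive integer $n$ let $\sigma_n\colon[0,L_n]\to X$ be the geodesic segment from $x$ to $c(n)$, parametrized by arc length, where $L_n=d(x,c(n))$. First I would produce the candidate ray: each $\sigma_n$ is a $1$-Lipschitz map with $\sigma_n(0)=x$, and since $X$ is proper every closed ball is compact, so by the Arzel\`a--Ascoli theorem some subsequence of $(\sigma_n)$ converges uniformly on compact subsets of $[0,\infty)$ to a map $c'\colon[0,\infty)\to X$. Because $L_n=d(x,c(n))\geq d(p,c(n))-d(x,p)=n-d(x,p)\to\infty$, the domains exhaust $[0,\infty)$; since a uniform limit of arc-length geodesics is again an arc-length geodesic, $c'$ is a geodesic ray based at $x$.

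It remains to show $c'$ is asymptotic to $c$. The key step is to bound the distance between proportionally parametrized points. Applying the convexity of the metric in the CAT(0) space $X$ to the two geodesics $t\mapsto \sigma_n(tL_n)$ and $t\mapsto c(tn)$, both defined on $[0,1]$ and both terminating at $c(n)$, the function $t\mapsto d\bigl(\sigma_n(tL_n),c(tn)\bigr)$ is convex, takes the value $d(x,p)$ at $t=0$ and $0$ at $t=1$, and is therefore bounded above by $d(x,p)$. Fixing $s\geq 0$ and substituting $t=s/L_n$ yields $d\bigl(\sigma_n(s),c(sn/L_n)\bigr)\leq d(x,p)$. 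Since $\abs{L_n-n}\leq d(x,p)$ by the triangle inequality, we have $sn/L_n\to s$, so letting $n\to\infty$ along the convergent subsequence and using continuity of $d$ gives $d\bigl(c'(s),c(s)\bigr)\leq d(x,p)$. Thus $c'$ is asymptotic to $c$ with constant $K=d(x,p)$.

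For uniqueness, suppose $c'$ and $c''$ are geodesic rays based at $x$, each asymptotic to $c$; then they are asymptotic to one another, so $t\mapsto d\bigl(c'(t),c''(t)\bigr)$ is bounded. This function is convex, again by convexity of the metric applied to the two geodesics issuing from the common point $x$, and it vanishes at $t=0$. A nonnegative convex function on $[0,\infty)$ that vanishes at $0$ but is nonzero somewhere must grow at least linearly, contradicting boundedness; hence $d\bigl(c'(t),c''(t)\bigr)\equiv 0$ and $c'=c''$.

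I expect the main obstacle to lie in the existence half: one must verify carefully that the Arzel\`a--Ascoli limit is a genuine ray defined on all of $[0,\infty)$ rather than a mere segment, and then execute the limiting step in the asymptoticity estimate, where the shift $sn/L_n\to s$ must be controlled uniformly enough to pass to the limit. The uniqueness argument, by contrast, is routine once the convexity of the distance between two geodesics is invoked.
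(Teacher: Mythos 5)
Your proof is correct. Note that the paper does not actually prove this proposition; it simply defers to Bridson--Haefliger \cite{BH} (Proposition II.8.2 there), so there is no in-paper argument to compare against line by line. Your argument is essentially the standard one: existence of the candidate ray as a limit of the segments $[x,c(n)]$, asymptoticity from convexity of $t\mapsto d(\gamma_1(t),\gamma_2(t))$ for proportionally parametrized geodesics, and uniqueness because a bounded, nonnegative, convex function on $[0,\infty)$ vanishing at $0$ is identically zero. The one substantive difference from the cited source is that \cite{BH} proves the statement for \emph{complete} CAT(0) spaces, extracting the limit ray by showing the points $\sigma_n(t)$ form a Cauchy sequence (again via convexity), whereas you use properness and Arzel\`a--Ascoli; since the proposition as stated here assumes properness, your route is legitimate and slightly more elementary, at the cost of generality. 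Two small points to tidy: the maps $\sigma_n$ have differing domains $[0,L_n]$, so to invoke Arzel\`a--Ascoli you should either extend each $\sigma_n$ to be constant on $[L_n,\infty)$ or run a diagonal argument over the compacta $[0,T]$ --- you flag this yourself, and either fix works; and in the convexity step the bound is actually $d(\sigma_n(tL_n),c(tn))\leq(1-t)\,d(x,p)$, which you correctly relax to $d(x,p)$, so nothing is lost.
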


For a proof of this (and more details on what follows), see \cite{BH}.

We wish to define a topology on $\overline X$ that induces the metric topology on $X$. Given a point in $\partial X$, we define a neighborhood basis for the point as follows: 

\noindent Pick a basepoint $x_0\in X$. Let $c$ be a geodesic ray starting at $x_0$, and let $\epsilon>0$, $r>0$. Let $S(x_0,r)$ denote the sphere of radius $r$ centered at $x_0$, let $B(x_0, r)$ denote the open ball of radius $r$ centered at $x_0$ and let $p_r:X-B(x_0,r)\rightarrow S(x_0,r)$ denote the projection to $S(x_0,r)$. Define
\begin{center}
$U(c,r,\epsilon)=\{x\in\overline X :d(x,x_{0})>r,d(p_{r}(x),c(r))<\epsilon\}.$
\end{center}
This consists of all points in $\overline X $ whose projection to $S(x_0,r)$ is within $\epsilon$ of the point of the sphere through which $c$ passes. These sets together with the metric balls in $X$ form a basis for the \textbf{cone topology}. The set $\partial X$ with this topology is sometimes called the \textbf{visual boundary}. In this article, we will call it the boundary of $X$.

\begin{prop}
If $X$ and $Y$ are proper CAT(0) spaces, then $\partial(X \times Y) \cong \partial X * \partial Y$, where $*$ denotes the spherical join.
\end{prop}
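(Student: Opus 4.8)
The plan is to exhibit an explicit bijection between asymptoty classes of geodesic rays in $X\times Y$ and points of the spherical join $\partial X * \partial Y$, and then to promote it to a homeomorphism for the cone topology. The starting point is the structure of geodesics in a product: with the $\ell^2$ product metric one has $d((x,y),(x',y'))^2 = d_X(x,x')^2 + d_Y(y,y')^2$, so a unit-speed geodesic ray $c$ in $X\times Y$ based at $(x_0,y_0)$ splits as $c(t)=(c_1(t),c_2(t))$, where $c_1$ and $c_2$ are geodesics parametrized proportionally to arc length, with respective speeds $\cos\theta$ and $\sin\theta$ for a unique $\theta\in[0,\pi/2]$. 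When $\cos\theta>0$ the unit-speed reparametrization of $c_1$ represents a point $\xi\in\partial X$, and when $\sin\theta>0$ the unit-speed reparametrization of $c_2$ represents a point $\eta\in\partial Y$. Recalling that $\partial X * \partial Y$ is $\partial X\times[0,\pi/2]\times\partial Y$ with $\{\theta=0\}$ collapsed onto $\partial X$ and $\{\theta=\pi/2\}$ collapsed onto $\partial Y$, this assignment defines a map $\Phi\colon\partial X * \partial Y\to\partial(X\times Y)$ by $\Phi(\xi,\theta,\eta)=[\,t\mapsto(\gamma_\xi(t\cos\theta),\gamma_\eta(t\sin\theta))\,]$, where $\gamma_\xi,\gamma_\eta$ are the unit-speed representatives based at $x_0,y_0$.

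Next I would check that $\Phi$ is a well-defined bijection. Well-definedness at the cone points is immediate: if $\theta=0$ the $Y$-coordinate is constant and $\eta$ is irrelevant, matching the collapsing in the join, and symmetrically at $\theta=\pi/2$. Surjectivity is precisely the statement that every geodesic ray in the product is a product of geodesics, which is exactly the Pythagorean splitting above. Injectivity follows because two product rays based at $(x_0,y_0)$ are asymptotic if and only if their speeds agree (the same $\theta$) and their non-constant factors are asymptotic in $X$ and in $Y$ respectively.

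The topological heart is then to show $\Phi$ is a homeomorphism for the cone topology. Here I would use properness: since $X$ and $Y$ are proper, so is $X\times Y$, and $\overline{X\times Y}$, $\overline X$, $\overline Y$ are compact; the join of compact spaces is compact, so it suffices to verify that $\Phi$ is a continuous bijection between compact Hausdorff spaces. One relates cone-topology convergence in $\partial(X\times Y)$ to simultaneous convergence of the triples $(\xi_n,\theta_n,\eta_n)\to(\xi,\theta,\eta)$ in the join topology, by tracking the projections $p_r$ of the product rays to spheres $S((x_0,y_0),r)$ and using the product metric to see that these projections control the factor projections and the angle $\theta$. As a sharper by-product, one can compute the Tits angle between two such rays via the law of cosines in the CAT(0) product and pass to the limit, obtaining
\[
\cos\angle(\Phi(\xi,\theta,\eta),\Phi(\xi',\theta',\eta')) = \cos\theta\cos\theta'\cos\angle_X(\xi,\xi') + \sin\theta\sin\theta'\cos\angle_Y(\eta,\eta'),
\]
which is exactly the defining metric of the spherical join; this upgrades the homeomorphism to a Tits isometry and pins down the join structure on the nose.

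The main obstacle will be the continuity verification at the cone points $\theta\in\{0,\pi/2\}$, where one of the boundary coordinates degenerates. There one must show that a sequence of product rays with $\theta_n\to 0$ converges in the cone topology to the pure-$X$ ray $\gamma_\xi$ regardless of the behavior of the $\eta_n$, exactly matching the collapsing built into the join topology. Establishing this compatibility, together with the compactness argument that promotes a continuous bijection to a homeomorphism, is where the genuine care is required; the algebraic splitting of geodesics and the angle computation are comparatively routine consequences of the Pythagorean product metric.
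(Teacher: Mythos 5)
The paper does not prove this proposition; it is quoted as a standard fact, with the surrounding discussion deferring to \cite{BH} (where it appears as Exercise II.8.11 together with the Tits-angle computation of II.9.11). Your outline is essentially that standard argument and is correct in structure: the Pythagorean splitting of geodesics in the $\ell^2$ product into constant-speed factor geodesics, the resulting parametrization of rays from $(x_0,y_0)$ by triples $(\xi,\theta,\eta)$ with the correct degeneration at $\theta\in\{0,\pi/2\}$, the classification of asymptoty (equal speeds and asymptotic non-constant factors), and the promotion of the continuous bijection to a homeomorphism via compactness of the join of the compact boundaries $\partial X$, $\partial Y$ and the Hausdorff property of the cone topology on $\partial(X\times Y)$. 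The one place you have only gestured at, and correctly identified as the delicate point, is continuity of $\Phi$ at the cone points: there you must estimate $d\bigl(p_r(\Phi(\xi_n,\theta_n,\eta_n)), p_r(\Phi(\xi,0,\eta))\bigr)$ uniformly in $\eta_n$ as $\theta_n\to 0$, which follows from the product metric since the $Y$-coordinate of the ray at parameter $r$ lies within distance $r\sin\theta_n$ of $y_0$. Carrying that estimate out would complete the proof; nothing in the plan would fail.
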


If the graph $\Gamma$ splits as a non-trivial join $\Gamma_1 \vee \Gamma_2$, then the group $A_\Gamma$ splits as the direct product $A_{\Gamma_1} \times A_{\Gamma_2}$, and so we have $\mathcal{S}_\Gamma \cong \mathcal{S}_{\Gamma_1} \times \mathcal{S}_{\Gamma_2}$. The previous proposition then gives that $\partial \mathcal{S}_\Gamma \cong \partial \mathcal{S}_{\Gamma_1} * \partial \mathcal{S}_{\Gamma_2}$. Any non-trivial spherical join is path connected, and so $\partial \mathcal{S}_\Gamma$ is path connected. 

\begin{lemma}
\label{track}
There is a bound $\delta >0$ such that if $\alpha$ is a CAT(0) geodesic path in $\SG$, then there is a Cayley graph geodesic path $\beta$ in $\Lambda_\Gamma$ (contained naturally in $\SG$) such that each vertex of $\beta$ is within distance $\delta$ of $\alpha$, and each point of $\alpha$ is within $\delta$ of a vertex of $\beta$.
\end{lemma}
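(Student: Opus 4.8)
The plan is to show that any CAT(0) geodesic path $\alpha$ in $\SG$ stays uniformly close to the one-skeleton, and then that the nearest lattice points can be threaded together into a Cayley-graph geodesic. The key structural fact I would exploit is that $\SG$ is a CAT(0) cube complex of bounded dimension (bounded by the largest clique in $\Gamma$, hence by the number of vertices $n$), so every cube is a unit cube in $[0,1]^k$ with $k\le n$. This gives an \emph{a priori} bound: inside any single cube, every point is within $\tfrac{\sqrt n}{2}$ of a vertex of that cube, and consecutive vertices of a cube differ by a single generator. So a first candidate for $\delta$ is something like $\sqrt n$, coming purely from the cube geometry, and the content of the lemma is that we can make a \emph{global} choice that works for the geodesic as a whole.

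First I would parametrize $\alpha:[0,L]\to\SG$ by arc length and record the sequence of (open) cubes it passes through, say $\alpha$ crosses cubes $Q_0,Q_1,\dots,Q_m$ in order, entering each across a face. At each transition point $\alpha(t_i)$, which lies on a shared face of $Q_{i-1}$ and $Q_i$, I would select a nearest vertex $v_i$ of that face; these $v_i$ are vertices of $\SG$, i.e.\ elements of $A_\Gamma$, and by the cube bound each satisfies $d(v_i,\alpha(t_i))\le \tfrac{\sqrt n}{2}$. The natural candidate for $\beta$ is then a Cayley-graph geodesic from $v_0$ to $v_m$ (equivalently, from the chosen vertex near $\alpha(0)$ to the one near $\alpha(L)$). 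The claim ``each point of $\alpha$ is within $\delta$ of a vertex of $\beta$'' and ``each vertex of $\beta$ is within $\delta$ of $\alpha$'' are the two Hausdorff-closeness conditions I must verify for this $\beta$.

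The main obstacle is the second direction: controlling that the \emph{combinatorial} geodesic $\beta$ does not wander far from the \emph{CAT(0)} geodesic $\alpha$. The vertices $v_i$ I pick along $\alpha$ trace out an edge path whose length $\ell(\beta_0)$ is comparable to $L=\ell(\alpha)$ up to the dimension constant, and since $\beta$ is a \emph{geodesic} in the word metric it is no longer than this edge path; the difficulty is that a word-metric geodesic between two points need not fellow-travel the original path. Here I would invoke the special combinatorial structure of $\SG$: by Lemma \ref{diamond} (the ``diamond'' lemma) any two Cayley-graph geodesics between the same endpoints are rearrangements of one another, differing only by commuting-generator swaps. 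Because hyperplanes in a CAT(0) cube complex separate $\SG$ into two halfspaces (Lemma \ref{hypsplit}) and a geodesic crosses each hyperplane at most once, the set of hyperplanes crossed by $\beta$ is determined by its endpoints. The CAT(0) geodesic $\alpha$ from $v_0$ to $v_m$ must cross exactly the same separating hyperplanes (else $v_0$ and $v_m$ would lie on the same side), so $\alpha$ and $\beta$ cross a common collection of hyperplanes in compatible order; a point of $\beta$ lying ``far'' from $\alpha$ would have to be separated from $\alpha$ by many hyperplanes, contradicting that both sides agree on which halfspace they occupy.

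Concretely, then, the order of steps is: (1) fix $\delta$ in terms of the dimension $n=|V(\Gamma)|$, using the unit-cube geometry; (2) build the vertex sequence $v_i$ and the candidate edge path, proving the ``$\alpha$ close to $\beta$'' direction directly from the cube bound and arc-length bookkeeping; (3) replace the edge path by a true word-geodesic $\beta$ and use Lemma \ref{diamond} together with the halfspace structure of hyperplanes to show $\beta$ and $\alpha$ cross the same hyperplanes, so that no vertex of $\beta$ can stray beyond a bounded number of hyperplanes from $\alpha$, yielding the ``$\beta$ close to $\alpha$'' direction. I expect step (3), translating the hyperplane-separation bookkeeping into an honest metric bound $\delta$, to be where the real work lies; the cube-geometry estimates in steps (1)--(2) should be routine.
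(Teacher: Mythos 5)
Note first that the paper does not prove this lemma at all: it is quoted with a pointer to Section 3 of \cite{HW}, so there is no internal proof to compare yours against. Judged on its own, your outline has the right skeleton for this standard fact --- bounded dimension, the separation property of hyperplanes (Lemma \ref{hypsplit}), and the principle that a geodesic crosses each hyperplane at most once are exactly the tools the result rests on --- but two of your steps are genuinely incomplete, and one claim of routineness is wrong.

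First, step (2) does not deliver the direction you say it does. The vertices $v_i$ you select along $\alpha$ are close to $\alpha$ by construction, but they are \emph{not} vertices of $\beta$: once you replace the edge path through the $v_i$ by a word-metric geodesic $\beta$ from $v_0$ to $v_m$, you lose all control over its intermediate vertices, in \emph{both} directions. So ``each point of $\alpha$ is within $\delta$ of a vertex of $\beta$'' is not arc-length bookkeeping; it needs the same hyperplane argument as the other direction. Second, the hyperplane argument is where all the content lies, and as written it has a gap. Lemma \ref{diamond} compares two \emph{Cayley graph} geodesics with common endpoints and cannot be used to compare $\beta$ with the CAT(0) geodesic $\alpha$; invoking it here does no work. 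What is actually needed is: (i) the combinatorial distance between two vertices of $\Lambda_\Gamma$ equals the number of hyperplanes separating them; (ii) both $\alpha$ and $\beta$ cross precisely the hyperplanes separating $v_0$ from $v_m$, each exactly once (for $\alpha$ this uses convexity of halfspaces in the CAT(0) metric, which you should state); and (iii) a quantitative bound on how far the two paths' \emph{orders} of crossing those hyperplanes can disagree --- the point being that two hyperplanes crossed in opposite orders by two paths with the same endpoints must themselves intersect, and the dimension bound limits how many pairwise-intersecting hyperplanes exist. Your sentence ``a point of $\beta$ lying far from $\alpha$ would have to be separated from $\alpha$ by many hyperplanes, contradicting that both sides agree'' asserts the conclusion of this count rather than performing it, and performing it (choosing the right comparison point on $\alpha$ for each vertex of $\beta$ and bounding the symmetric difference of crossed-hyperplane sets) is exactly the substance of the proof in \cite{HW}. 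The approach is salvageable, but as it stands the hardest step is missing.
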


A proof of this can be found in Section 3 of \cite{HW}.

\section{Result}

The goal of this section is to prove the following theorem:

\begin{thm}
\label{main}
Let $\Gamma$ be a connected graph. Suppose $\Gamma$ contains an induced subgraph $(\{a,b,c,d\}, \{\{a,b\}, \{b,c\}, \{c,d\}\})$ (isomorphic to $P_4$), and there are subsets $B\subset lk(c)$ and $C \subset lk(b)$ with the following properties:
\begin{enumerate}
\item $B$ separates $c$ from $a$ in $\Gamma$, with $d\notin B$;
\item $C$ separates $b$ from $d$ in $\Gamma$, with $a\notin C$;
\item $B\cap C=\emptyset$.
\end{enumerate}

Then $\partial \mathcal{S}_\Gamma$ is not path connected.
\end{thm}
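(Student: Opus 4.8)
The plan is to fix the basepoint $x_0=1\in\SG$, let $\xi_a$ and $\xi_d$ be the boundary points represented by the geodesic rays $a^\infty$ and $d^\infty$, and show they lie in distinct path components of $\partial\SG$. Suppose toward a contradiction that $\sigma\colon[0,1]\to\partial\SG$ is a path with $\sigma(0)=\xi_a$ and $\sigma(1)=\xi_d$. Since $\SG$ is proper CAT(0), each $\sigma(t)$ is represented by a unique geodesic ray $r_t$ from $x_0$, and continuity in the cone topology means the $r_t$ vary continuously, uniformly on each ball $B(x_0,R)$. The goal is to produce a separating structure that $\sigma$ cannot cross: I want a collection of convex ``walls'' in $\SG$ so that $\xi_a$ and $\xi_d$ sit on genuinely opposite sides, in a way that is stable under small perturbations of direction, so that $\{t:r_t \text{ is on the }a\text{-side}\}$ is clopen in $[0,1]$.

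First I would record the combinatorial consequences of the hypotheses. The separation conditions force $b\in B$ and $c\in C$: if $b\notin B$ then, since $a\notin lk(c)\supseteq B$ and $c\notin lk(c)\supseteq B$, the edges $\{a,b\},\{b,c\}$ survive in $\Gamma-B$ and connect $a$ to $c$, contradicting (1); symmetrically (2) gives $c\in C$. Because $B\subset lk(c)$ separates $a$ from $\{c,d\}$, the group splits as an amalgam $A_\Gamma=A_{\Gamma_1}*_{\B}A_{\Gamma_2}$ with $a\in\Gamma_1$ and $c,d\in\Gamma_2$, so the translates $g\SB$ are convex subcomplexes separating $\SG$ (Lemma~\ref{hypsplit}-style walls). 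Writing $U_1,U_2$ for the two sides of the wall $\SB$ through $x_0$, a hyperplane count shows $d(a^n,\SB)\ge n$ (all $n$ of the $a$-hyperplanes along $a^\infty$ separate $a^n$ from the $a$-free complex $\SB$), so $a^\infty$ escapes $\SB$ and $\xi_a\in\partial U_1\setminus\partial\SB$, while $\xi_d,\xi_c\in\partial U_2\setminus\partial\SB$. Symmetrically the $C$-splitting $A_\Gamma=A_{\Delta_1}*_{\C}A_{\Delta_2}$ (with $a,b\in\Delta_1$, $d\in\Delta_2$) puts $\xi_a$ and $\xi_d$ on opposite sides of $\partial\SC$. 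Since $\partial\SG\setminus\partial\SB$ is disconnected by the two open sides, any path from $\xi_a$ to $\xi_d$ must meet $\partial\SB$, and likewise must meet $\partial\SC$.

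The transversality of the two wall systems is where hypothesis (3) enters, and I expect it to be precisely the content of Claim~\ref{empty}. For special subgroups of a right-angled Artin group one has $\B\cap\C=\langle B\cap C\rangle$, so $B\cap C=\emptyset$ gives $\B\cap\C=\{1\}$; hence $\SB\cap\SC=\{x_0\}$ and, more to the point, $\partial\SB\cap\partial\SC=\emptyset$. Thus the two ``connecting sets'' that a path is forced into are themselves disjoint. I would then promote the single-wall observation to the full interleaved system of translates $g\SB$ and $g\SC$ (these are the edge-spaces of the two Bass--Serre trees $T_B$ and $T_C$ on which $A_\Gamma$ acts): crossing from the $\xi_a$-chamber to the $\xi_d$-chamber requires turning a ``corner'' between a $B$-wall and a transverse $C$-wall, and the emptiness $\partial\SB\cap\partial\SC=\emptyset$ is exactly what forbids a continuous corner-turn, forcing the projection of $\sigma$ to $T_B\times T_C$ to be both forced to move and unable to move continuously. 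This is what rules out the naive path $\xi_a\to\xi_b\to\xi_c\to\xi_d$ suggested by the chain of flats $\langle a,b\rangle,\langle b,c\rangle,\langle c,d\rangle$.

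The main obstacle will be upgrading these separation facts into the required local constancy of the side invariant, i.e.\ showing that crossing between the chambers cannot happen continuously. Here the cube-complex combinatorics does the real work: I would invoke Lemma~\ref{track} to replace each CAT(0) ray $r_t$ by a fellow-traveling Cayley geodesic $\beta_t$, and use Lemmas~\ref{radel} and~\ref{diamond} to put the initial segments of the $\beta_t$ into a canonical rearranged form, so that the chamber containing $\beta_t$ beyond radius $R$ is a well-defined combinatorial quantity once $R$ exceeds the tracking constant $\delta$. Comparing $\beta_t$ with a nearby $\beta_{t'}$ via the diamond of Lemma~\ref{diamond}, their common down edge path already carries the side data, and conclusion~(3) of that lemma (disjoint, commuting letters) together with $\B\cap\C=\{1\}$ should force the tail of the ray to stay on one side of each relevant wall, so the side cannot jump under small perturbations. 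Making this uniform across $t\in[0,1]$ — equivalently, showing that the only way to interpolate would require the direction to pass through boundary points lying in $\partial\SB\cap\partial\SC=\emptyset$ — is the crux; granting it, the clopen-set argument on $[0,1]$ finishes the proof.
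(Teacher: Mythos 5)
Your choice of endpoints is fatally flawed: $a^\infty$ and $d^\infty$ in fact lie in the \emph{same} path component of $\partial \SG$. Since $\{a,b\}$, $\{b,c\}$, $\{c,d\}$ are edges of $\Gamma$, the subgroups $\langle a,b\rangle$, $\langle b,c\rangle$, $\langle c,d\rangle$ are copies of $\Integ^2$ whose flats have limit sets that are circles embedded in $\partial\SG$; concatenating arcs in these three circles realizes exactly the ``naive path'' $\xi_a\to\xi_b\to\xi_c\to\xi_d$ that you mention, and no argument can rule it out because it exists. Relatedly, your clopen-set mechanism cannot work as stated: the fact that $L(\SB)$ separates the limit sets of the two sides (this is part (3) of Claim \ref{sep1}) only forces a path to pass \emph{through} $L(\SB)$; it does not make your side invariant locally constant, because $L(\SB)$ is a large nonempty set (it contains $b^{\pm\infty}$, as $b\in B$). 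A path may meet $L(\SB)$ at one parameter value and $L(\SC)$ at another, so the disjointness $L(\SB)\cap L(\SC)=\emptyset$ produces no contradiction; the ``forbidden corner-turn'' you invoke is an intuition, not an argument. You correctly identify several ingredients (the amalgam splittings over $\B$ and $\C$, the role of $B\cap C=\emptyset$ via Claim \ref{empty}, tracking and the diamond lemma), but the actual obstruction to path connectivity is missing.

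The paper gets around exactly this difficulty by abandoning single walls and fixed coordinate directions. It constructs two specific rays $r=\prod_{i}(cb)^{k_i}cdab$ and $s=\prod_{i}dbc(b^2c)^{k_i}b^2a$ with recursively growing exponents, whose limit points $L(r)\neq L(s)$ are the genuinely bad boundary points, and places an infinite interleaved family of translated walls $v_i\SB$ and $w_i\SC$ along them. A nesting argument with $a$- and $d$-hyperplanes shows that any path from a point of $L(w_1\SC)$ to $L(r)$ must pass in order through $L(v_3\SB)$, $L(w_5\SC)$, $L(v_7\SB),\dots$ (Remark \ref{bounce}), while a geodesic-rearrangement argument (Lemma \ref{diamond} together with Claim \ref{empty}, which is where $B\cap C=\emptyset$ really enters) shows $L(v_i\SB)\to L(s)$ and $L(w_i\SC)\to L(r)$. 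The contradiction is that a continuous path on a compact interval cannot oscillate infinitely often between disjoint neighborhoods of the two distinct points $L(r)$ and $L(s)$ --- not that a side-of-wall function is clopen. Your proposal is missing this entire mechanism, and the statement it aims to prove is false.
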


In fact, we prove a stronger result, with the hypothesis $B\cap C = \emptyset$ replaced by the statement of Claim \ref{empty}. For the remainder of this section, suppose $a,b,c,d \in \Gamma$, $B\subset lk(c)$, and $C \subset lk(b)$ are as in Theorem \ref{main}. Note that $b\in B$, $c\in C$. We wish to consider the following rays in $\Lambda_\Gamma$ (equivalently in $\SG$), based at the identity vertex $*$:

$$r=cdab(cb)^2cdab(cb)^6\dots=\prod_{i=1}^{\infty}(cb)^{k_i}cdab$$

and

$$s=dbcb^2adbc(b^2c)^2b^2adbc(b^2c)^6b^2a\dots=\prod_{i=1}^{\infty}dbc(b^2 c)^{k_i}b^2 a$$

where the $k_i$ are defined recursively with $k_0=-1$, $k_{i+1}=2k_i + 2$.

Define the following vertices of $r$, for $n\geq 0$:

$$v_n = \left(\prod_{i=1}^{n}(cb)^{k_i}cdab\right)(cb)^{k_{n+1}}cd$$ 

$$v_n'=v_na$$

Define the following vertices of $s$, for $n\geq 0$:

$$w_n = \left(\prod_{i=1}^{n}dbc(b^2 c)^{k_i}b^2 a\right)$$

$$w_n'=w_nd$$

\vspace{2mm}

\begin{center}
\includegraphics[scale=.83]{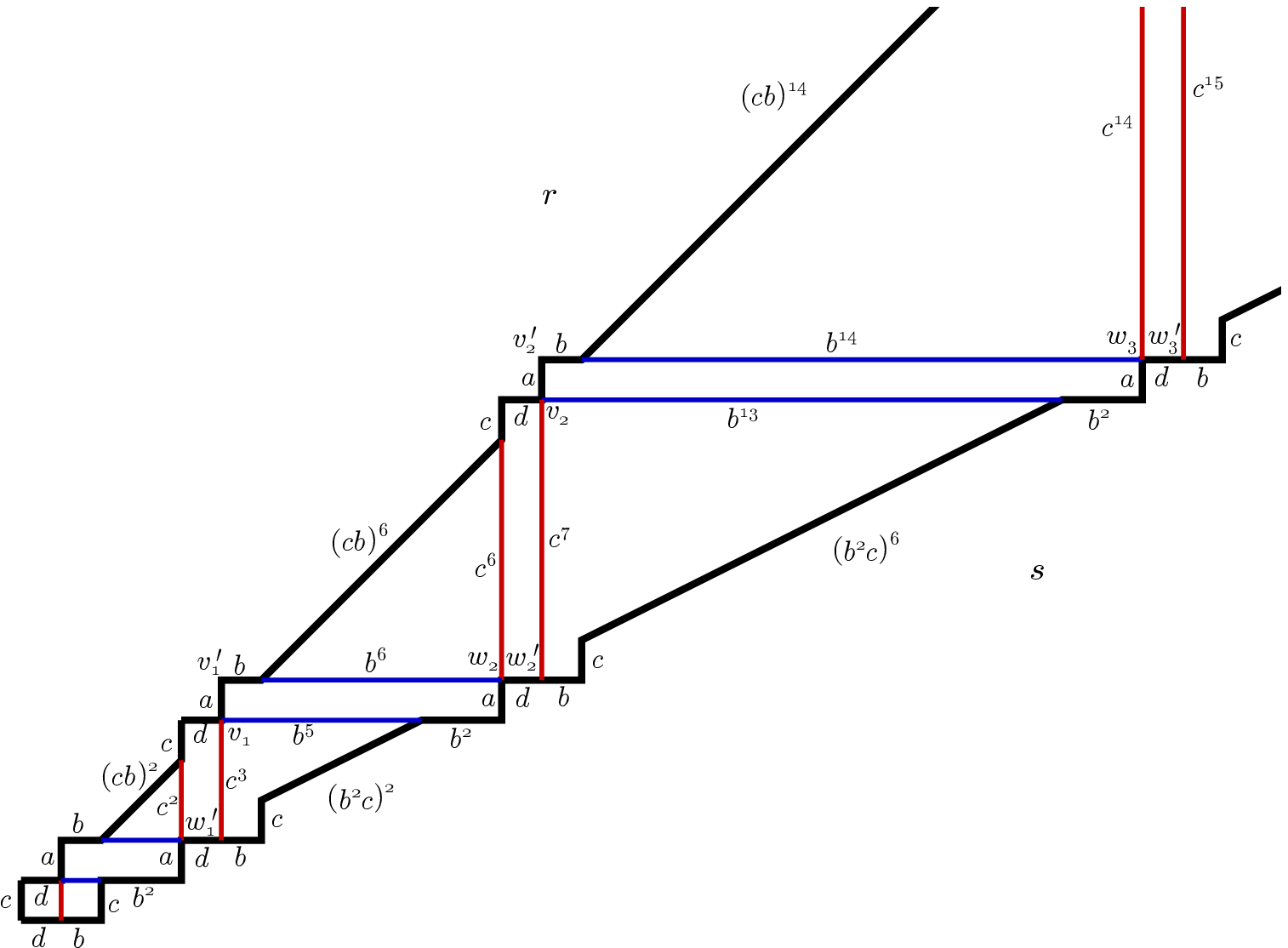}
\end{center}

\centerline{\textsc{Figure 2}}

\medskip
We have $v_0 = cd$, $v_0' = cda$, $v_1 = cdab(cb)^2cd$, $w_0 = *$, $w_0'=d$, $w_1 = dbcb^2a$. It will be helpful to refer to Figure 2 for many of the claims that follow.

The following is proved in \cite{CMT}.

\begin{claim}
\label{lines}
For $n \geq 0$, $v_n=w_n'c^{k_{n+1}+1}$ and $v'_{n}b^{k_{n+2}+1}=w_{n+1}$.

\end{claim}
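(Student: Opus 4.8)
The plan is to prove the two identities simultaneously by an interleaved induction on $n$, working entirely inside the copy of $A_{P_4}$ generated by $a,b,c,d$; since $\{a,b,c,d\}$ is an induced subgraph, the only relations among these letters are $[a,b]=[b,c]=[c,d]=1$, and these are all I will use. The one computational fact I would isolate at the outset is that, because $b$ and $c$ commute, $(cb)^k=c^kb^k$ and $(b^2c)^k=b^{2k}c^k$; this lets me freely normalize any word in $b$'s and $c$'s, and it is the engine behind every simplification below.

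First I would record the base case: for $n=0$ we have $v_0=cd$, while $w_0'c^{k_1+1}=d\,c^{0+1}=dc=cd$ by $[c,d]=1$, so the first identity holds at $n=0$. Next I would prove the implication ``first identity at level $n$'' $\Rightarrow$ ``second identity at level $n$.'' Writing $m=k_{n+1}$, the recursion $k_{n+2}=2k_{n+1}+2$ gives $k_{n+2}+1=2m+3$. Substituting $v_n=w_n\,d\,c^{m+1}$ into $v_n'\,b^{k_{n+2}+1}=v_n\,a\,b^{2m+3}$ yields $w_n\,d\,c^{m+1}a\,b^{2m+3}$. On the other side $w_{n+1}=w_n\cdot dbc(b^2c)^mb^2a$, and collecting commuting letters (using $bc=cb$ to gather the $c$'s, then pushing $a$ leftward past the $b$'s) simplifies the trailing block to $d\,c^{m+1}a\,b^{2m+3}$; cancelling $w_n$ on the left gives the second identity. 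The role of the recursion is precisely that it forces the exponent $b^{2m+3}$ produced by $(b^2c)^mb^2$ to match the prescribed $b^{k_{n+2}+1}$.

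Finally I would prove the implication ``second identity at level $n-1$'' $\Rightarrow$ ``first identity at level $n$'' for $n\geq 1$. From the product definition of $r$ I first observe $v_n=v_{n-1}'\,b\,(cb)^{k_{n+1}}cd$: the block $(cb)^{k_n}cdab$ of $r$ that terminates at $v_{n-1}'$ needs only a single $b$ to be completed, after which the next block begins. Rewriting the inductive hypothesis as $v_{n-1}'=w_n\,b^{-(k_{n+1}+1)}$ and using $[b,c]=1$ to collapse $b^{-k_{n+1}}(cb)^{k_{n+1}}=c^{k_{n+1}}$, I obtain $v_n=w_n\,c^{k_{n+1}}cd=w_n\,d\,c^{k_{n+1}+1}=w_n'c^{k_{n+1}+1}$. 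Chaining the three implications as (first)$_0\to$(second)$_0\to$(first)$_1\to\cdots$ completes the induction.

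I expect the only real obstacle to be bookkeeping rather than ideas: threading the recursion $k_{i+1}=2k_i+2$ so that the $b$- and $c$-exponents line up on both sides, and cleanly justifying the block relation $v_n=v_{n-1}'\,b\,(cb)^{k_{n+1}}cd$ directly from the $\prod$-definitions of $r$ and the vertices $v_n,v_n'$. Once $[b,c]=1$ is used to put each word into the normal form ``all $c$'s, then all $b$'s,'' every remaining step is a routine commutation, so the argument should reduce to the two exponent computations indicated above.
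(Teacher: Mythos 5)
Your argument is correct, and all three computations check out: the base case $v_0=cd=dc=w_0'c^{k_1+1}$; the step $(\text{first})_n\Rightarrow(\text{second})_n$, where $dbc(b^2c)^mb^2a=dc^{m+1}b^{2m+3}a=dc^{m+1}ab^{2m+3}$ matches $w_n^{-1}v_n'b^{k_{n+2}+1}$ precisely because $k_{n+2}+1=2k_{n+1}+3$; and the step $(\text{second})_{n-1}\Rightarrow(\text{first})_n$, where the block identity $v_n=v_{n-1}'b(cb)^{k_{n+1}}cd$ follows directly from peeling the last factor off the defining product and $b^{-k_{n+1}}(cb)^{k_{n+1}}=c^{k_{n+1}}$ collapses everything. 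Note, though, that the paper does not prove this claim at all --- it is imported verbatim from the Conner--Mihalik--Tschantz paper \cite{CMT} --- so there is no in-paper argument to compare against; what you have supplied is a self-contained elementary verification using only the relations $[a,b]=[b,c]=[c,d]=1$ (which hold in $A_\Gamma$ itself, so you do not even need to invoke the fact that $\{a,b,c,d\}$ spans an induced $P_4$ and hence a special subgroup). That makes your write-up a useful addition rather than a redundant one: it keeps the key bookkeeping identity $k_{i+1}=2k_i+2$ visibly responsible for the matching of $b$-exponents, which is exactly the feature the rest of the paper's argument depends on.
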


Since $b\in B$ and $c\in C$, we then have $v_n \C = w_n' \C$ and $w_n \B = v'_{n-1} \B$.

If $Q_c$ denotes the component of $c$ in $\Gamma - B$, and $Q_b$ denotes the component of $b$ in $\Gamma - C$, then $A_\Gamma$ can be represented as $\langle Q_c \cup B \rangle *_{B} \langle \Gamma - Q_c\rangle$ or $\langle Q_b \cup C \rangle *_{C} \langle \Gamma - Q_b\rangle$, and so at each vertex $x$ of $\Lambda_\Gamma$, the cosets $x\B$ and $x\C$ separate $\Lambda_\Gamma$. Therefore, if $x\SB$ and $x\SC$ denote the cube complexes generated by $\B$ and $\C$ respectively at a vertex $x$ of $\SG$, then $x\SB$ and $x\SC$ separate $\SG$. Note that $\SG - x\SB$ has at least two components: one containing $xc^{-1}$, and one containing $xa$. Similarly, $\SG - x\SC$ has at least two components: one containing $xb^{-1}$, and one containing $xd$.

For each $i$, define the following components of $\SG$:
\begin{enumerate}
\item $V_i^+$ is the component of $\SG - v_i\SB$ containing $v_ia$;
\item $V_i^-$ is the component of $\SG - v_i\SB$ containing $v_ic^{-1}$;
\item $W_i^+$ is the component of $\SG - w_{i}\SC$ containing $w_{i}d$;
\item $W_i^-$ is the component of $\SG - w_{i}\SC$ containing $w_{i}b^{-1}$.
\end{enumerate}
Note $V_i^+$ contains the vertices of $r$ after $v_i$, and $W_i^+$ contains the vertices of $s$ after $w_{i}$. For each $V_i^\pm$, (respectively $W_i^\pm$), let $\overline{V_i^\pm}$ denote the closure of $V_i^\pm$ in $\SG$, so $\overline{V_i^\pm}=V_i^\pm \cup v_i\SB$ $(\overline{W_i^\pm}=W_i^\pm \cup w_{i}\SC)$. For a subset $S$ of $\SG$, let $L(S)$ denote the limit set of $S$ in $\partial \SG$.

\begin{claim}
\label{sep1}
\begin{enumerate}
\item The sets $\overline{V_i^\pm}$, $\overline{W_i^\pm}$ are convex.
\item $L(\overline{V_i^+})\cap L(\overline{V_i^-}) = L(v_i\SB)$ and $L(\overline{W_i^+})\cap L(\overline{W_i^-}) = L(w_{i}\SC)$.
\item The set $L(v_i\SB)$ (respectively $L(w_{i}\SC)$) separates $L(\overline{V_i^+})$ and $L(\overline{V_i^-})$ (respectively $L(\overline{W_i^+})$ and $L(\overline{W_i^-})$) in $\partial X$.
\end{enumerate}
\end{claim}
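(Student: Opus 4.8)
The plan is to prove the three parts in order, taking convexity of the standard coset subcomplexes as the foundation and isolating the cone-topology argument for part (3), which I expect to be the crux. Throughout, I work with geodesic rays based at the vertex $v_i\in v_i\SB$ (and at $w_i\in w_i\SC$ for the $W$-statements), so that each boundary point is represented by a unique ray from $v_i$.

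For part (1) the key input is that $v_i\SB$, the subcomplex generated by the coset $v_i\B$, is convex in $\SG$: this is the standard fact that a subcomplex generated by a coset of a special subgroup of a right-angled Artin group is convex in its CAT(0) cube complex (see \cite{Sag}). Granting this, I would show each $\overline{V_i^\pm}=V_i^\pm\cup v_i\SB$ is convex by a crossing argument. If a CAT(0) geodesic $\gamma$ between two points of $\overline{V_i^+}$ left $\overline{V_i^+}$, it would enter some other component $V'$ of the open set $\SG-v_i\SB$; the endpoints of the maximal subinterval of $\gamma$ lying in $V'$ are topological boundary points of $V'$, hence lie in $v_i\SB$, and convexity of $v_i\SB$ then forces that entire subsegment into $v_i\SB$, contradicting that its interior lies in $V'$. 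The same argument gives convexity of $\overline{W_i^\pm}$ using $w_i\SC$.

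For part (2), the inclusion $v_i\SB\subseteq\overline{V_i^+}\cap\overline{V_i^-}$ gives $L(v_i\SB)$ inside both limit sets. For the reverse, fix $\xi\in L(\overline{V_i^+})\cap L(\overline{V_i^-})$ and consider the ray $\rho_\xi$ from $v_i$ to $\xi$. Since $\overline{V_i^+}$ is closed and convex, contains $v_i$, and $\xi\in L(\overline{V_i^+})$, the ray $\rho_\xi$ lies in $\overline{V_i^+}$ (it is a limit of geodesics $[v_i,p_n]\subseteq\overline{V_i^+}$ with $p_n\to\xi$), and likewise lies in $\overline{V_i^-}$. As $V_i^+$ and $V_i^-$ are disjoint and each disjoint from $v_i\SB$, one has $\overline{V_i^+}\cap\overline{V_i^-}=v_i\SB$, so $\rho_\xi$ lies in $v_i\SB$ and $\xi\in L(v_i\SB)$. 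The $W$-statement is identical.

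Part (3) is where the real work lies, and I expect the local-constancy step to be the main obstacle. For $\xi\in\partial X\setminus L(v_i\SB)$ I would define $\mathrm{side}(\xi)$ to be the component of $\SG-v_i\SB$ containing the tail of $\rho_\xi$. This is well defined because $t\mapsto d(\rho_\xi(t),v_i\SB)$ is a convex function (distance to a convex set in a CAT(0) space) vanishing at $t=0$; hence it is zero on an initial interval $[0,t^*]$ and positive and nondecreasing afterward, so the ray leaves $v_i\SB$ exactly once and its tail is a connected subset of $\SG-v_i\SB$, lying in one component. Since $\xi\notin L(v_i\SB)$ forces $t^*<\infty$, the tail is nonempty; and the argument of part (2) shows $\mathrm{side}(\xi)=V_i^+$ for $\xi\in L(\overline{V_i^+})\setminus L(v_i\SB)$ and $\mathrm{side}(\xi)=V_i^-$ for $\xi\in L(\overline{V_i^-})\setminus L(v_i\SB)$. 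I would then prove $\mathrm{side}$ is locally constant: fixing a tail point $\rho_\xi(s_0)$ at distance $\epsilon_0>0$ from $v_i\SB$, the cone topology guarantees that for $\eta$ near $\xi$ the ray $\rho_\eta$ stays uniformly close to $\rho_\xi$ on $[0,s_0]$ (by convexity of the CAT(0) metric), so $\rho_\eta(s_0)$ lies within $\epsilon_0/2$ of $\rho_\xi(s_0)$, hence in the same component and already past $\eta$'s own departure time $t^*$; this forces $\mathrm{side}(\eta)=\mathrm{side}(\xi)$. Consequently $\mathrm{side}$ is constant along any path in $\partial X$ avoiding $L(v_i\SB)$, so no such path can run from $L(\overline{V_i^+})\setminus L(v_i\SB)$ to $L(\overline{V_i^-})\setminus L(v_i\SB)$, which is precisely the claimed separation (and identically for $w_i\SC$). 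The delicate point is exactly this local constancy: one must prevent the ray to a nearby boundary point from diving back across $v_i\SB$, and that is handled by the monotonicity of $d(\rho_\xi(\cdot),v_i\SB)$ together with the uniform control supplied by the cone topology.
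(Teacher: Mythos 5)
Your proposal is correct. Parts (1) and (2) flesh out the same ideas the paper states tersely: for (1) the paper simply notes that the only way out of $\overline{V_i^+}$ is through the convex subcomplex $v_i\SB$, which is exactly your crossing argument; for (2) the paper takes two asymptotic rays, one in each of $\overline{V_i^+}$ and $\overline{V_i^-}$, and observes both must stay a bounded distance from $v_i\SB$, whereas you use the single ray from $v_i$ landing in both closed convex sets and hence in their intersection $v_i\SB$ --- same content, slightly cleaner packaging. For part (3) you take a genuinely different route. The paper fixes $w\in v_i\SB$, cones the putative path $\alpha$ off to a continuous family of rays $H(t,s)=\beta_t(s)$, and for each radius $n$ uses connectedness of $H([0,1]\times\{n\})$, which meets both $V_i^+$ and $V_i^-$, to produce a point $z_n\in v_i\SB$; the limit points of $\{z_n\}$ then lie in $Im(\alpha)\cap L(v_i\SB)$. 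You instead define a side function on $\partial\SG - L(v_i\SB)$ via the tail of the ray from $v_i$, justify it by convexity and eventual monotonicity of $t\mapsto d(\rho_\xi(t),v_i\SB)$, and get local constancy directly from the definition of the cone topology. Both arguments are standard and both work. The paper's version has the small advantage of actually exhibiting points of the path inside $L(v_i\SB)$, which is the form invoked later in Remark \ref{bounce} (the path must \emph{pass through} the sets $L(v_{2n+1}\SB)$, $L(w_{2n+1}\SC)$ in order); your version is logically equivalent for that purpose and avoids the mildly delicate verification that the limit points of the $z_n$ really land in $Im(\alpha)$.
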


\begin{proof}
For (1), the only way out of the set $\overline{V_i^+}$ is through the convex subcomplex $v_i\SB$.

For (2), if $q$ is a ray in $L(\overline{V_i^+})\cap L(\overline{V_i^-})$, then there are geodesic rays $q_1 \in \overline{V_i^+}$, $q_2 \in \overline{V_i^-}$ that are a bounded distance from $q$, and therefore from one another. Thus both $q_1$ and $q_2$ remain a bounded distance from $v_i\SB$, as required.

For (3), suppose $\alpha: [0,1]\rightarrow \partial \SG$ is a path connecting $x \in L(V_i^+)$ and $y \in L(V_i^-)$. Choose $w \in v_i\SB$, and for each $t\in[0,1]$, let $\beta_t:[0, \infty)\rightarrow \SG$ be the geodesic ray from $w$ to $\alpha(t) \in \partial \SG$. This gives a continuous map $H:[0,1]\times [0, \infty) \rightarrow \SG$ where $H(t,s) = \beta_t(s)$. Note $H(0, s) \subset V_i^+$, $H(1, s)\subset V_i^-$. For each $n \geq 0$, let $z_n$ be a point of $H([0,1]\times \{n\})$ in $v_i\SB$; then $L(\cup_{n=1}^{\infty} \{z_n\}) \subset Im(\alpha) \cap L(v_i\SB)$ as required.
\end{proof}

In \cite{CMT}, it is shown that $r$ and $s$ track distinct CAT(0) geodesics in $\SG$, so $L(r)$ and $L(s)$ are distinct one-element sets.

\begin{claim}
For $n\geq 1$, the sets $L(w_{2n-1}\SC)$ and $L(r)$ are separated in $\partial \SG$ by $L(v_{2n+1}\SB)$.
\end{claim}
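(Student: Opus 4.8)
The plan is to deduce the claim from the separation of limit sets already proved in Claim \ref{sep1}(3), after placing $L(r)$ and $L(w_{2n-1}\SC)$ on opposite sides of the wall $v_{2n+1}\SB$. Applying Claim \ref{sep1}(3) with $i=2n+1$, the set $L(v_{2n+1}\SB)$ separates $L(\overline{V_{2n+1}^+})$ from $L(\overline{V_{2n+1}^-})$ in $\partial\SG$. Every vertex of $r$ beyond $v_{2n+1}$ lies in $V_{2n+1}^+$, so $L(r)\in L(V_{2n+1}^+)$, and one checks $L(r)\notin L(v_{2n+1}\SB)$ (by Lemma \ref{track} the point $L(r)$ is represented by a geodesic tracking $r$, and $r$ does not stay within bounded distance of $v_{2n+1}\B$, since its forward labels $c,d,a$ are not eventually drawn from $B$). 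Thus it suffices to prove $L(w_{2n-1}\SC)\subseteq L(\overline{V_{2n+1}^-})$: granting this, a path from $L(r)$ to a point $y\in L(w_{2n-1}\SC)$ either terminates in $L(v_{2n+1}\SB)$ (if $y$ lies there) or joins $L(V_{2n+1}^+)$ to $L(V_{2n+1}^-)$ and hence meets $L(v_{2n+1}\SB)$ by Claim \ref{sep1}(3); either way the two limit sets are separated.

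Since $\overline{V_{2n+1}^-}$ is closed, the boundary containment follows once we show the subcomplex $w_{2n-1}\SC$ itself is contained in $\overline{V_{2n+1}^-}$, after which we pass to limit sets. As $w_{2n-1}\SC$ is connected, this reduces to two facts: (a) some vertex of $w_{2n-1}\SC$, for instance $w_{2n-1}$, lies in $\overline{V_{2n+1}^-}$; and (b) $w_{2n-1}\SC$ is disjoint from the separating subcomplex $v_{2n+1}\SB$, so that it cannot straddle the wall and must lie in a single component of $\SG-v_{2n+1}\SB$.

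For (a) I would use Claim \ref{lines} to write $w_{2n-1}$ explicitly in terms of the vertices of $r$ --- namely $w_{2n-1}=v_{2n-2}\,a\,b^{k_{2n}+1}$ --- and then exhibit an edge path from $w_{2n-1}$ to the base vertex $v_{2n+1}c^{-1}$ of $V_{2n+1}^-$ that never crosses $v_{2n+1}\SB$, placing $w_{2n-1}$ on the $c^{-1}$-side. For (b), disjointness of the $C$-wall $w_{2n-1}\SC$ from the $B$-wall $v_{2n+1}\SB$ is equivalent to $v_{2n+1}^{-1}w_{2n-1}\notin\B\,\C$; this is precisely where the hypothesis $B\cap C=\emptyset$, in the sharpened form recorded in Claim \ref{empty}, is used, together with the index gap between $2n-1$ and $2n+1$, which forces these two particular walls to be strictly separated rather than merely tangent.

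I expect step (b) to be the main obstacle: it amounts to establishing the clean nesting of the $B$-walls and $C$-walls along $r$ and $s$, and verifying $v_{2n+1}^{-1}w_{2n-1}\notin\B\,\C$ requires manipulating the explicit words for $r$ and $s$ through the amalgamated-product decomposition $A_\Gamma=\langle Q_c\cup B\rangle *_{B}\langle\Gamma-Q_c\rangle$, where Claim \ref{empty} does the essential work. Once (a) and (b) are in hand, concluding $w_{2n-1}\SC\subseteq\overline{V_{2n+1}^-}$ and combining with Claim \ref{sep1}(3) is routine.
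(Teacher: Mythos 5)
Your reduction is the right one and matches the paper's: apply Claim \ref{sep1}(3) with $i=2n+1$, note $L(r)\in L(V_{2n+1}^+)$, and reduce everything to showing $w_{2n-1}\SC\subset V_{2n+1}^-$ (equivalently your (a) and (b)). But that containment is the entire mathematical content of the claim, and you do not prove it --- you explicitly defer step (b) as ``the main obstacle,'' propose to attack it by showing $v_{2n+1}^{-1}w_{2n-1}\notin\B\,\C$ via normal forms in the amalgam, and never carry the computation out. Step (a) is likewise only a promise to ``exhibit an edge path.'' As written, the proposal is a plan with the decisive step missing, so there is a genuine gap.

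For comparison, the paper closes this gap without any double-coset computation, by a short nesting-of-walls argument: let $D_{2n}$ be the $d$-hyperplane separating $w_{2n}$ from $w_{2n}'$ and $A_{2n}$ the $a$-hyperplane separating $v_{2n}$ from $v_{2n}'$. Since $d\notin C$, the subcomplex $w_{2n-1}\SC$ meets no $d$-edge, hence lies entirely on the $*$-side of $D_{2n}$; since $a$ and $d$ do not commute, $D_{2n}$ and $A_{2n}$ do not cross; since $a\notin B$ and $d\notin B$, neither $A_{2n}$ nor $D_{2n}$ crosses $v_{2n+1}\SB$, and the order of these three pairwise non-crossing walls along $r$ forces $w_{2n-1}\SC\subset V_{2n+1}^-$. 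Note also that your guess that Claim \ref{empty} ``does the essential work'' here is misplaced: the paper needs only the hypotheses $d\notin B$, $a\notin C$ (automatic from $B\subset lk(c)$, $C\subset lk(b)$ and the induced $P_4$) and $[a,d]\neq 1$; Claim \ref{empty} is reserved for the later Claim \ref{close}. If you want to salvage your route, you must actually establish the disjointness $w_{2n-1}\SC\cap v_{2n+1}\SB=\emptyset$ and the side condition; the hyperplane argument above is the efficient way to do both at once.
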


\begin{proof}
First note that $L(r)\in L(V_{i}^+)$ for each $i\geq 1$. Let $D_{2n}$ be the $d$-hyperplane that separates $w_{2n}$ from $w_{2n}'$ (and also separates $v_{2n}$ from the previous vertex of $r$), and let $A_{2n}$ be the $a$-hyperplane that separates $v_{2n}$ from $v_{2n}'$ (and also separates $w_{2n+1}$ from the previous vertex of $s$). Note that $w_{2n-1}\SC$ is contained in the same component of $\SG-D_{2n}$ as $*$ since $d \notin C$ and therefore no path in $\C$ based at $w_{2n-1}$ crosses $D_{2n}$. Also note $A_{2n} \subset V_{2n+1}^-$. Since $D_{2n}$ and $A_{2n}$ cannot cross (since $d$ does not commute with $a$), and $D_{2n}$ is not in the same component as $v_{2n+1}\SB$ in $\SG-A_{2n}$, we have that $w_{2n-1}\SC \subset V_{2n+1}^-$. The previous claim gives the result.
\end{proof}

\begin{claim}
For $n\geq 1$, the sets $L(v_{2n-1}\SB)$ and $L(r)$ are separated in $\partial \SG$ by $L(w_{2n+1}\SC)$.
\end{claim}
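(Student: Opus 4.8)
The statement to prove is the mirror-image of the previous claim, so the plan is to mimic that proof with the roles of $b$/$c$, $a$/$d$, $v$/$w$, and $B$/$C$ interchanged.

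\medskip

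The plan is to show $v_{2n-1}\SB$ lands, as a whole, inside the single component $W_{2n+1}^-$ of $\SG - w_{2n+1}\SC$, after which Claim \ref{sep1}(3) finishes the job since $L(r) \in L(W_i^+)$ for every $i \geq 1$ (by the note following the definition of $W_i^+$, $W_i^+$ contains the tail of $s$, and $r$ tracks a geodesic asymptotic into this region — more precisely $L(r)\in L(W_i^+)$ as established when tracking $r$). First I would set up the two relevant hyperplanes. Let $A_{2n}$ be the $a$-hyperplane separating $v_{2n}$ from $v_{2n}'=v_{2n}a$, which also separates $w_{2n+1}$ from the preceding vertex of $s$; and let $D_{2n}$ be the $d$-hyperplane separating $w_{2n}$ from $w_{2n}'=w_{2n}d$, which also separates $v_{2n}$ from the previous vertex of $r$. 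These are exactly the hyperplanes used in the previous claim, so their separating properties are already available.

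\medskip

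Next I would argue that $v_{2n-1}\SB$ lies on the $*$-side of $A_{2n}$: since $a \notin C$ is the hypothesis symmetric to $d \notin C$, and $B$ contains no letter equal to $a$ (indeed $a \notin B$ because $B \subset lk(c)$ and $a \notin lk(c)$ as $\{a,c\}$ is a non-edge of the induced $P_4$), no edge path in $\B$ based at $v_{2n-1}$ crosses the $a$-hyperplane $A_{2n}$; hence $v_{2n-1}\SB$ sits in the same component of $\SG - A_{2n}$ as $*$. Then I would observe $D_{2n} \subset W_{2n+1}^-$, the analogue of the fact $A_{2n} \subset V_{2n+1}^-$ from the previous claim. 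Since $a$ does not commute with $d$ (they are non-adjacent in the induced $P_4$), the hyperplanes $A_{2n}$ and $D_{2n}$ cannot cross; combined with the fact that $A_{2n}$ is not in the same component as $w_{2n+1}\SC$ in $\SG - D_{2n}$, this forces $v_{2n-1}\SB \subset W_{2n+1}^-$.

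\medskip

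The main obstacle I anticipate is verifying the non-crossing/separation bookkeeping precisely, namely confirming that the indices line up so that $D_{2n} \subset W_{2n+1}^-$ and that $A_{2n}$ lies strictly between $v_{2n-1}\SB$ and $w_{2n+1}\SC$ in the relevant component decomposition. This requires reading off from the product expressions for the $v_i, w_i$ (and Claim \ref{lines}) which side of each hyperplane the complexes $v_{2n-1}\SB$ and $w_{2n+1}\SC$ fall on, and checking consistency with Figure 2. Once the two hyperplanes are correctly placed and their non-crossing is invoked (using that $a$ and $d$ do not commute), the containment $v_{2n-1}\SB \subset W_{2n+1}^-$ follows, and Claim \ref{sep1}(3) applied to $w_{2n+1}\SC$ yields that $L(w_{2n+1}\SC)$ separates $L(v_{2n-1}\SB)$ from $L(r)$ in $\partial \SG$, as desired.
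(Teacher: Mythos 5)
Your overall strategy is the paper's: sandwich $v_{2n-1}\SB$ between two non-crossing hyperplanes (one labeled $a$, one labeled $d$) so as to force $v_{2n-1}\SB \subset W_{2n+1}^-$, then invoke Claim \ref{sep1}(3). But you have chosen the wrong $a$-hyperplane, and this breaks the key separation step. The paper's proof replaces the pair $(D_{2n}, A_{2n})$ of the previous claim by $(A_{2n-1}, D_{2n})$ --- that is, the \emph{inner} hyperplane is $A_{2n-1}$, the $a$-hyperplane dual to the edge from $v_{2n-1}$ to $v_{2n-1}'$ (equivalently, to the last edge of $s$ entering $w_{2n}$), and the \emph{outer} hyperplane is $D_{2n}$. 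You instead take the inner hyperplane to be $A_{2n}$. Now trace positions along the rays: $D_{2n}$ is dual to the $d$-edge entering $v_{2n}$ on $r$ and to the edge from $w_{2n}$ to $w_{2n}'$ on $s$, while $A_{2n}$ is dual to the edge from $v_{2n}$ to $v_{2n}'$ and to the final $a$-edge of $s$ entering $w_{2n+1}$. So one crosses $D_{2n}$ \emph{before} reaching $A_{2n}$ on either ray; since $a$ and $d$ do not commute, $A_{2n}$ lies entirely on the non-basepoint side of $D_{2n}$, exactly where $w_{2n+1}\SC$ also lies (as $d\notin C$). Hence your assertion that ``$A_{2n}$ is not in the same component as $w_{2n+1}\SC$ in $\SG - D_{2n}$'' is false, and the containment $v_{2n-1}\SB \subset W_{2n+1}^-$ does not follow from the facts you list: the set you control, the $*$-side of $A_{2n}$, actually contains $D_{2n}$, so it cannot be pushed into $W_{2n+1}^-$ by $D_{2n}$.

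The correct ordering of the relevant objects is $v_{2n-1}\SB$, then $A_{2n-1}$, then $D_{2n}$, then $w_{2n+1}\SC$: one checks that $v_{2n-1}\SB$ lies on the $*$-side of $A_{2n-1}$ (using $a\notin B$, as you correctly note), that $D_{2n}\subset W_{2n+1}^-$, and that $A_{2n-1}$ is separated from $w_{2n+1}\SC$ by $D_{2n}$ (since $A_{2n-1}$ is dual to the $a$-edge entering $w_{2n}$, which precedes the $d$-edge dual to $D_{2n}$ on $s$). If you insist on using $A_{2n}$, the roles must be swapped: take $D_{2n}$ as the inner hyperplane (using $d\notin B$ to keep $v_{2n-1}\SB$ on its $*$-side) and $A_{2n}$ as the outer one contained in $W_{2n+1}^-$; as written, your assignment of roles is inconsistent with the actual positions of the hyperplanes.
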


\begin{proof}
The proof is analagous to the proof of the previous claim, replacing the hyperplanes $D_{2n}$ and $A_{2n}$ with the hyperplanes $A_{2n-1}$ and $D_{2n}$ respectively.
\end{proof}

\begin{remark}
\label{bounce}
The previous two claims imply that if there is a path in $\partial \SG$ between a point of $L(w_1\SC)$ and $L(r)$, the path must pass through (in order) $L(v_{3}\SB)$, $L(w_{5}\SC)$, $L(v_{7}\SB)$, $L(w_{9}\SC)$, and so on. 
\end{remark}

We will now show that the sets $L(v_i\SB)$ (resp. $L(w_i\SC)$) are eventually `close' to $L(s)$ (resp. $L(r)$), implying the path described in Remark \ref{bounce} cannot exist. 

\begin{claim}
\label{empty}
$C\cap lk(a) \cap lk(d) = C \cap lk(a) \cap lk(c) = \emptyset$, and $B\cap lk(a) \cap lk(d) = B \cap lk(d) \cap lk(b) = \emptyset$.
\end{claim}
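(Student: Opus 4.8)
The plan is to prove each of the four emptiness statements by contradiction: assuming a vertex lies in one of the intersections, I build an explicit short path joining the two vertices that the relevant separating set is supposed to disconnect, and check that this path avoids the separator. Before starting, I record the membership facts I will use repeatedly. Since $C \subset lk(b)$ contains no self-loop, $b \notin C$; likewise $c \notin B$. Because $B$ separates $c$ from $a$, both $a$ and $c$ are vertices of $\Gamma - B$, so $a, c \notin B$; symmetrically $b, d \notin C$. Finally the hypotheses give $d \notin B$, $a \notin C$, and $B \cap C = \emptyset$, so every element of $C$ lies outside $B$ and conversely.

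First I would treat the two statements involving $C$, each by exhibiting a path from $a$ to $c$ in $\Gamma - B$, which contradicts hypothesis (1). If $x \in C \cap lk(a) \cap lk(c)$, then $x \notin B$ by disjointness, and since $a, x, c \notin B$ with edges $\{a,x\}$ and $\{x,c\}$ present, the path $a - x - c$ lies in $\Gamma - B$ and joins $a$ to $c$, a contradiction. If instead $x \in C \cap lk(a) \cap lk(d)$, I use the $P_4$-edge $\{c,d\}$ together with $d \notin B$: the four vertices $a, x, d, c$ all avoid $B$, and the edges $\{a,x\}$, $\{x,d\}$, $\{d,c\}$ give a path $a - x - d - c$ in $\Gamma - B$ from $a$ to $c$, again contradicting (1). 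Hence both $C$-intersections are empty.

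The two statements involving $B$ follow by the evident symmetry of the hypotheses under the relabeling $a \leftrightarrow d$, $b \leftrightarrow c$, $B \leftrightarrow C$, which interchanges (1) and (2) and interchanges $d \notin B$ with $a \notin C$. Concretely, if $y \in B \cap lk(d) \cap lk(b)$ then $y \notin C$, and the edges $\{b,y\}$, $\{y,d\}$ give a path $b - y - d$ in $\Gamma - C$ joining $b$ to $d$, contradicting (2); and if $y \in B \cap lk(a) \cap lk(d)$ then the $P_4$-edge $\{a,b\}$ together with $a \notin C$ yields the path $b - a - y - d$ in $\Gamma - C$, once more contradicting (2).

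There is no substantial obstacle here; the content is entirely combinatorial. The one point requiring care is the bookkeeping of which endpoints are guaranteed to lie outside the separating set --- in particular remembering to invoke the explicit hypotheses $d \notin B$ and $a \notin C$ precisely in the two ``length-three'' paths, and to extract $a, c \notin B$ and $b, d \notin C$ from the meaning of ``separates.'' Once these memberships are in hand, each case reduces to a one-line verification that the displayed path avoids the separator and connects the two vertices it is supposed to disconnect.
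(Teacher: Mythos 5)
Your proof is correct and is essentially the paper's argument: in each case you exhibit the short path through the hypothesized link vertex (e.g.\ $(a,e,d,c)$ or $(a,e,c)$) and derive a contradiction with the separation hypothesis using $B\cap C=\emptyset$ together with $d\notin B$ (resp.\ $a\notin C$). The only difference is presentational — the paper concludes $e\in B$ and then contradicts disjointness, while you use disjointness first to show the path avoids $B$ — and your explicit bookkeeping that $a,c\notin B$ and $b,d\notin C$ is a harmless elaboration of what the paper leaves implicit.
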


\begin{proof}
If $e \in C\cap lk(a) \cap lk(d)$, then $(a, e, d, c)$ is a path from $a$ to $c$ in $\Gamma$. Since $B$ separates $a$ from $c$ and $d \notin B$, we must have $e \in B$, but $B\cap C = \emptyset$. Similarly, if $e \in C \cap lk(a) \cap lk(c)$, then $(a, e, c)$ is a path from $a$ to $c$ in $\Gamma$, and so $e \in B$, contradiction. The remaining statements are proved identically.
\end{proof}

For $i\geq 1$, let $r_i$ (respectively $s_i$) be the segment of $r$ (respectively $s$) between $*$ and $v_i'$ (respectively $*$ and $w_i'$). Let $\beta_i$ be a Cayley graph geodesic ray based at $w_i'$ with labels in $B$, and let $\gamma_i$ be a Cayley graph geodesic ray based at $v_i'$ with labels in $C$. 

\begin{claim}
\label{close}
Any Cayley graph geodesic from $*$ to a point of $\gamma_i$ must pass within 4 units of $v_{i}'$. Any Cayley graph geodesic from $*$ to a point of $\beta_i$ must pass within 4 units of $w_i'$.
\end{claim}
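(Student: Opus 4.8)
The plan is to prove the much stronger statement that every such geodesic passes within $1$ of $v_i'$ (so certainly within $4$). Fix a point $q=v_i'g$ of $\gamma_i$, where $g$ is a reduced word with letters in $C$. First I would check that $r_i$ followed by the segment of $\gamma_i$ from $v_i'$ to $q$ is itself a Cayley graph geodesic, i.e.\ that the word $\overline{v_i'}\,g$ is reduced. By Lemma \ref{radel} a cancellation would delete a letter of $\overline{v_i'}\in\{a,b,c,d\}^{\pm}$ against its inverse in $g$, so the cancelling generator would lie in $\{a,b,c,d\}\cap C$; since $C\subset lk(b)$ and the $P_4$ is induced, this intersection is $\{c\}$, but the terminal letter $a$ of $v_i'$ satisfies $a\notin lk(c)$ and lies between any such $c$ and all of $g$, blocking the deletion. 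Hence $v_i$ and $v_i'$ both lie on a geodesic from $*$ to $q$.

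Since any two geodesics with the same endpoints are rearrangements of one another, every geodesic $\alpha$ from $*$ to $q$ is obtained from $\overline{v_i'}\,g=\overline{v_i}\,a\,g$ by repeatedly transposing adjacent commuting generators (cf.\ Definition \ref{rearr} and Lemma \ref{diamond}). The key step is the \emph{barrier claim}: in every such rearrangement, every letter of $\overline{v_i}$ precedes every letter of $g$. To see this, suppose a letter $y$ of $\overline{v_i}$ (so $y$ sits at or before the terminal $d$) were made adjacent to a letter $z$ of $g$, with $z\in C$. Then both the terminal $a$ and the terminal $d$ must be shuffled out of the block strictly between $y$ and $z$. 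Now $a$ cannot move left past $d$ (since $a\notin lk(d)$) and so must move right, forcing $z\in lk(a)$; and $d$ cannot move right past $a$ (since $d\notin lk(a)$) and so must move left past $y$ and the letters between it and $d$, all of which must therefore lie in $lk(d)\cap\{a,b,c,d\}=\{c\}$, forcing $y=c$ and hence $z\in lk(c)$ (or else $y=d$ itself, forcing $z\in lk(d)$). In either case $z\in C\cap lk(a)\cap(lk(c)\cup lk(d))=\emptyset$ by Claim \ref{empty}, a contradiction.

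Granting the barrier claim, let $x$ be the vertex of $\alpha$ immediately after its last letter belonging to $\overline{v_i}$. Every letter of $g$ comes after every letter of $\overline{v_i}$, so no letter of $g$ precedes $x$; thus the letters preceding $x$ are exactly those of $\overline{v_i}$, together with the terminal $a$ if it occurs earlier. Their product is therefore $v_i$ or $v_i'$, so $x\in\{v_i,v_i'\}$ and $\alpha$ passes within $1\le 4$ of $v_i'$. The statement for $\beta_i$ and $w_i'$ is proved identically after interchanging $a\leftrightarrow d$ and $b\leftrightarrow c$ (and $B\leftrightarrow C$), a swap that preserves Claim \ref{empty} and carries the terminal pair $d,a$ of $v_i'$ to the terminal pair $a,d$ of $w_i'$.

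The main obstacle is the barrier claim: everything hinges on showing that no generator occurring in $q$ can be commuted across the terminal non-commuting pair $d,a$ of $v_i'$. This is precisely the step where both equalities of Claim \ref{empty} are used — together with the consequences of the $P_4$ being \emph{induced}, namely that $a$ and $d$ have no common neighbour in $\{a,b,c,d\}$ and that $lk(d)\cap\{a,b,c,d\}=\{c\}$ — and it is where the separation hypotheses on $B$ and $C$ genuinely enter. I expect the only delicate point to be justifying that, to make $y$ and $z$ adjacent, the letters $a$ and $d$ must be moved entirely out of the intervening block (rather than some cleverer chain of moves circumventing them); this follows from the fact that the relative order of a non-commuting pair of letter-occurrences is invariant under commutation swaps.
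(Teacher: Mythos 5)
Your proposal is correct in substance, reaches a sharper conclusion than the paper (every geodesic passes through $v_i$, hence within $1$ of $v_i'$), but travels a genuinely different route. The paper localizes at $v_i'$ via the diamond of Lemma \ref{diamond}: if a geodesic $\rho$ stayed farther than $4$ from $v_i'$, the down and up edge paths $\tau,\delta$ there would have length at least $3$; since every geodesic into $v_i'$ ends in $a$, the labels of $\delta$ lie in $C\cap lk(a)$, while the labels of $\tau$ are either $d$ or forced (by commuting past the terminal $d$ of $v_i$) to be $c$, pushing the labels of $\delta$ into $C\cap lk(a)\cap lk(d)$ or $C\cap lk(a)\cap lk(c)$, both empty by Claim \ref{empty}. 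Your global shuffle/barrier argument on $\overline{v_i}\,a\,g$ is driven by exactly the same two emptiness statements and the same induced-$P_4$ facts, so the proofs are close cousins; yours buys a cleaner geometric conclusion at the cost of invoking the full normal-form theorem (all geodesic words for an element differ by adjacent commutations, and the relative order of non-commuting occurrences is invariant), which the paper never states and whose Lemma \ref{diamond} is engineered precisely to avoid. Two points need repair. First, the dichotomy ``$d$ cannot move right past $a$ and so must move left past $y$'' is not valid: to vacate the block between $y$ and $z$, the occurrence of $d$ need not cross $a$ at all — it can exit rightward by crossing $z$, which requires only $z\in lk(d)$. Happily that omitted case yields $z\in C\cap lk(a)\cap lk(d)=\emptyset$, already inside the contradiction set you quote, so one added sentence closes the gap. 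Second, your claim that all letters between $y$ and $d$ must lie in $lk(d)$ is false in general (an intervening letter can first be commuted out of the way, as in $yed\mapsto eyd\mapsto edy$ with $e\notin lk(d)$); only the consequence $y\in lk(d)$ is needed, and that part is correct since the relative order of the non-commuting pair $y,d$ cannot flip otherwise. With those fixes the argument, including the symmetric $\beta_i$/$w_i'$ half under your $a\leftrightarrow d$, $b\leftrightarrow c$ swap, goes through.
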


\begin{proof}
First observe that if $(r_i, \gamma_i)$ is not $\Lambda_\Gamma$-geodesic, then an edge of $\gamma_i$ must delete with an edge of $r_i$. Since $a,b,d \notin C$, the labels of these deleting edges must be $c$ and $c^{-1}$. However, the labels of these edges must also be in $lk(a) \cap lk (d)$, by Lemma \ref{radel} (see Figure 2). Therefore $(r_i, \gamma_i)$ is a Cayley geodesic.

Now, suppose there is a $\Lambda_\Gamma$-geodesic $\rho$ between $*$ and a point of $\gamma_i$ with $d(\rho, v_i') > 4$. Let $\alpha$ denote the segment of $(r_i, \gamma_i)$ between $*$ and the endpoint of $\rho$. Consider a diamond based at $v_i'$ for $\rho$ and $\alpha$ as in Lemma \ref{diamond}. Let $\tau$ and $\delta$ be the down edge path and up edge path respectively at $v_i'$, and note $\tau$ and $\delta$ have length at least 3. Every $\Lambda_\Gamma$-geodesic from $*$ to $v_i'$ must end with an edge labeled $a$, so every label of $\delta$ is in $lk(a)$. If an edge of $\tau$ has label $d$, then every label of $\delta$ is in $C \cap lk(a) \cap lk(d)$, but this set is empty by Claim \ref{empty}. By Lemma \ref{radel} every other edge of $\tau$ has its label in $lk(d)\cap\{a,b,c,d\}$, so the remaining edges of $\tau$ must be labeled $c$, but $C \cap lk(a) \cap lk(c)$ is also empty. Thus $d(\rho, v_i') \leq 4$. The proof of the second statement is identical.
\end{proof}

\begin{claim}
$\partial \SG$ is not path connected.
\end{claim}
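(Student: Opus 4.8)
The plan is to derive a contradiction from the assumption that $\partial \SG$ is path connected by showing that the ``bouncing'' path forced by Remark \ref{bounce} cannot actually exist. Suppose, toward a contradiction, that there is a path $\alpha$ in $\partial \SG$ connecting a point of $L(w_1\SC)$ to $L(r)$. By the two separation claims, $\alpha$ must pass through the sets $L(v_3\SB)$, $L(w_5\SC)$, $L(v_7\SB)$, $L(w_9\SC)$, and so on, in order, for all $n$. The strategy is to show that these separating limit sets accumulate onto the single points $L(s)$ and $L(r)$, so that the alleged path is forced to oscillate between two distinct points infinitely often near its terminal end, which is impossible for a continuous path into a Hausdorff space.

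First I would use Claim \ref{close} as the engine. The key geometric content is that the geodesic rays $\gamma_i$ (based at $v_i'$ with labels in $C$) and $\beta_i$ (based at $w_i'$ with labels in $B$) are ``anchored'' near $v_i'$ and $w_i'$ respectively: any Cayley graph geodesic from the basepoint $*$ to a point of $\gamma_i$ passes within $4$ units of $v_i'$, and symmetrically for $\beta_i$. Since by Claim \ref{lines} we have $v_i\C = w_i'\C$ and $w_i\B = v_{i-1}'\B$, the limit sets $L(v_i\SB)$ and $L(w_i\SC)$ are exactly the limit sets of these coset cube complexes, which are approached by the rays $\gamma$ and $\beta$. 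Combining the anchoring from Claim \ref{close} with Lemma \ref{track} (so that the CAT(0) geodesics toward these limit points track Cayley geodesics within a uniform constant $\delta$), I would show that any geodesic ray from $*$ limiting into $L(v_{2n+1}\SB)$ must pass within a bounded distance of $v_{2n+1}'$, and likewise any ray limiting into $L(w_{2n+1}\SC)$ passes near $w_{2n+1}'$. Because $v_n'$ lies on $r$ and $w_n'$ lies on $s$, the projections $p_r(\cdot)$ of these separating sets to a large sphere $S(*,R)$ converge to the sphere points through which $r$ and $s$ pass.

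The contradiction then comes from the cone topology. Fix a basepoint $*$ and a large radius $R$. As $n\to\infty$, the terminal subpath of $\alpha$ visits $L(v_{2n+1}\SB)$ and $L(w_{2n+1}\SC)$ alternately; by the previous paragraph, points of $L(v_{2n+1}\SB)$ project under $p_R$ arbitrarily close to $r(R)$, while points of $L(w_{2n+1}\SC)$ project arbitrarily close to $s(R)$. Since $r$ and $s$ track distinct CAT(0) geodesics (so $L(r)\neq L(s)$, as noted after Claim \ref{sep1}), we may choose $R$ and $\eps$ so that the basic neighborhoods $U(r,R,\eps)$ and $U(s,R,\eps)$ are disjoint. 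Then for all large $n$, the path $\alpha$ must enter $U(r,R,\eps)$ (when passing through $L(v_{2n+1}\SB)$) and then enter the disjoint set $U(s,R,\eps)$ (when passing through $L(w_{2n+1}\SC)$) and back again, infinitely often as it approaches its endpoint $L(r)$. A continuous path cannot oscillate between two fixed disjoint neighborhoods infinitely often in a shrinking parameter interval, giving the contradiction.

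The main obstacle I anticipate is the second step: rigorously converting the discrete, $4$-unit anchoring statement of Claim \ref{close} into a genuine topological closeness in the cone topology, i.e. controlling the projection $p_R$ of the \emph{entire} limit sets $L(v_{2n+1}\SB)$ and $L(w_{2n+1}\SC)$ rather than of a single geodesic. One must argue that every ray limiting into $L(v_{2n+1}\SB)$ not only passes near $v_{2n+1}'$ but thereafter stays close (in the visual sense) to the ray $s$, so that its sphere projection lands near $s(R)$ uniformly over the whole separating set; this requires combining the convexity from Claim \ref{sep1}(1), the tracking Lemma \ref{track}, and the fact that from $v_{2n+1}'$ the relevant cube complex $v_{2n+1}\SB$ lies along the direction of $s$ via the coset identification $w_{2n}\B = v_{2n-1}'\B$. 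Making the ``uniformly close projection'' quantitative — choosing $R$ large enough that the tracking constant $\delta$ and the $4$-unit slack are both negligible relative to $\eps$ at radius $R$ — is the technical heart of the argument.
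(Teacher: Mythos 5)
Your proposal follows essentially the same route as the paper: combine Claim \ref{close} with Lemma \ref{track} to show that the separating limit sets accumulate on the two distinct points $L(r)$ and $L(s)$, and then rule out the infinitely oscillating path forced by Remark \ref{bounce} via the cone topology. There is, however, one concrete error in your middle paragraph: you have the identification backwards. The sets $L(w_i\SC)$ converge to $L(r)$ and the sets $L(v_i\SB)$ converge to $L(s)$, not the other way around. The mechanism is that $v_n=w_n'c^{k_{n+1}+1}$ with $B\subset lk(c)$, so every ray in $v_n\SB$ is asymptotic to the corresponding ray $\beta_n$ based at $w_n'$, and Claim \ref{close} anchors geodesics to $\beta_n$ at $w_n'$, which lies on $s$ --- not at $v_n'$ as you assert; likewise $w_n=v_{n-1}'b^{k_{n+1}+1}$ together with $C\subset lk(b)$ anchors $L(w_n\SC)$ at $v_{n-1}'$ on $r$. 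This ``crossing over'' of each wall onto the \emph{other} ray is the whole point of the construction, and your final paragraph actually states the correct direction ($v_{2n+1}\SB$ lies along $s$), contradicting your middle paragraph. The swap does not damage the final contradiction, since the path still oscillates between disjoint neighborhoods of the two distinct points $L(r)$ and $L(s)$, but the justification must invoke the correct anchor points, and the relevant relations are the asymptoticity statements above rather than the coset equalities $v_i\C=w_i'\C$ and $w_i\B=v_{i-1}'\B$ applied to the wrong generating sets.
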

\begin{proof} 
Observe that since $v'_{n-1}b^{k_{n+1}+1}=w_n$ by Claim \ref{lines} and $C \subset lk(b)$, any ray $\alpha$ based at $w_n$ with labels in $C$ stays a bounded distance from the ray based at $v_{n-1}'$ with the same labels. Combining Claim \ref{close} and Lemma \ref{track}, we have that a CAT(0) geodesic from $*$ to a point of $L(\alpha)$ must pass within $\delta + 4$ of $v_{n-1}'$, where $\delta$ is the tracking constant given by Lemma \ref{track}.  We therefore have that any sequence of points $\{p_i\}_{i=1}^\infty$ with each $p_i \in L(w_i\SC) \subset \partial \SG$ must converge to $L(r)\in \partial \SG$. Similarly, any sequence of points $\{q_i\}_{i=1}^\infty$ with each $q_i \in L(v_i\SB) \subset \partial \SG$ must converge to $L(s) \in \partial \SG$. Therefore, by Remark \ref{bounce}, given any $\epsilon$, any path from a point of $L(w_1\SC)$ to $L(r)$ eventually bounces back and forth infinitely between the $\epsilon$-neighborhood of $L(s)$ and the $\epsilon$-neighborhood of $L(r)$, which is impossible; therefore, no such path exists.
\end{proof}

\newpage


\end{document}